\documentclass[12pt,a4paper]{article}
\pdfoutput=1
\usepackage{jheppub}
\usepackage{tikz} 

 \usepackage{amsmath, amssymb, latexsym, amsthm, amsfonts, mathrsfs,
   amscd, mathtools, etoolbox, pgfkeys, pgfopts, xparse, xstring}
 \usepackage{xcolor}  
 \usepackage{caption} 
 \usepackage{subcaption}
 \usepackage{slashed}   
\usepackage{float}      
\usepackage{dynkin-diagrams}  
\usetikzlibrary{chains}
   
\usepackage{graphicx} 
\graphicspath{ {project2 tex/} }

\def\CA{{\mathcal{A}}}

\def\CN{{\mathcal{N}}}

\def\CR{{\mathcal{R}}}

\newcommand{\be} {\begin{equation}}
\newcommand{\ee} {\end{equation}}
\newcommand{\bea} {\begin{eqnarray}}
\newcommand{\eea} {\end{eqnarray}}
\newcommand{\ba} {\begin{array}}
\newcommand{\ea} {\end{array}}
\newcommand{\nn} {\nonumber}

\theoremstyle{plain} 
\newtheorem{thm}{Theorem}[section]

\newtheorem{cor}[thm]{Corollary}
\newtheorem{prop}[thm]{Proposition}
 
\theoremstyle{definition}

\newcommand{\sgn} { {\rm sgn}}

 \title{Iterated integrals for generalized Appell functions}
 \author{Aradhita Chattopadhyaya$^a$, Jan Manschot$^{b,c}$}
\affiliation{ $^a$Chennai Mathematical Institute,\\
H1 SIPCOT IT Park\\
Siruseri, Kelambakkam
Tamil Nadu 603103, India\\
$^b$School of Mathematics, Trinity College, Dublin 2,
   Ireland\\
   $^c$Hamilton Mathematical Institute, Trinity College, Dublin 2,
   Ireland\\}

\abstract{Appell functions are a large class of multi-variable
  quasi-elliptic functions, which are also instances of higher depth mock
  modular forms. A central aspect of these functions is their non-holomorphic
modular completion. Our previous work developed Appell functions for a
general positive definite lattice $\Lambda$, and expressed the
non-holomorphic completion in terms of the generalized error functions
$M_P$, which are integrals over a $P$-dimensional hyperplane in
$\Lambda\otimes \mathbb{C}$. In this sequel, we express $M_P$ as a $P$-dimensional iterated integral of
variables $w_j\in \mathbb{H}$, such that the completion of the Appell
function takes the form of an iterated Eichler integral of modular forms. We apply this to the case of the root lattice of the
$A_N$ Lie algebra, which is of particular interest because of their
appearance in partition functions of topological twisted Yang-Mills theories.
}    
\begin{document} 
\maketitle       

\section{Introduction}  
\label{sec:intro}

Appell functions are multi-variable, quasi-elliptic functions going
back to the late 19'th century \cite{Appell:1886, Lerch:1892}. Recently, they have been recognized as important
examples of mock modular forms \cite{ZwegersThesis}, which themselves
have a long history going back to Ramanujan (1887-1920). He
wrote in his last letter to Hardy about 17 $q$-series which he called
`mock theta' functions. Precisely these
functions appear in theoretical
physics and mathematics, for example as partition functions of certain
three-dimensional ${\cal N}=2$ 
supersymmetric theories \cite{Cheng:2018vpl, Cheng:2022rqr}, and as
characters in the context of umbral moonshine \cite{Cheng:2012tq} . 

The mock theta functions have led to the more general concept of ``mock
modular forms'' \cite{MR2605321, ZwegersThesis}, and it is
established that all Ramanujan's 17 $q$-series are instances of these functions
\cite{ramanujan_lost_notebook, ZwegersThesis}.
Mock modular forms appear in many places in theoretical
physics, in particular as statistical partition functions in
supersymmetric theories, such as compactifications of Type II 
string theory on a Calabi-Yau three-fold $X$ to ${\cal
  N}=2$ supergravity, and in the specific case of $X={\rm K3}\times T^2$ to ${\cal N}=4$ supergravity. These compactifications give rise
to BPS states, which preserve part of the supersymmetry. Mock modular
forms arise as generating functions of degeneracies of BPS states, which are widely studied
for their applications to the entropy of BPS black holes 
\cite{Manschot:2007ha, Manschot:2009ia,
  Dabholkar:2012nd, Alexandrov:2012au, KumarGupta:2018rac, Alexandrov:2019rth,
  Chattopadhyaya:2021rdi, Alexandrov:2023ltz, Alexandrov:2024jnu,
  Alexandrov:2024wla, Pioline:2025xgf, Alexandrov:2025sig}. For ${\cal
  N}=4$ string compactifications on 
$K3\times T^2$ and CHL orbifolds, the generating function of  
Hurwitz-Kronecker class numbers appears as an element of black hole partition functions
\cite{Dabholkar:2012nd, Chattopadhyaya:2018xvg}. This generating function of class numbers is
a famous example of a mock modular form \cite{MR2605321},  whose properties were established by
Zagier \cite{zagier:75, HirzebruchZagier1976}. 
     
 This function also appears as the partition function of Vafa-Witten
 theory with gauge group $SU(2)$ on $\mathbb{CP}^2$ \cite{Klyachko:91,
   Vafa:1994tf}. This theory is a topological twisted version of $\CN=4$
 supersymmetric Yang-Mills theory, whose partition function on
 $\mathbb{CP}^2$ is a generating function of Euler numbers of moduli
 spaces of instantons, or within algebraic geometry, moduli spaces of rank 2, semi-stable coherent sheaves. This correspondence can be understood following the
 Donaldson-Uhlenbeck-Yau theorem \cite{Donaldson:1985zz,Uhlenbeck:1986ntc}. Moreover, the 
partition functions can be refined with an extra variable, to obtain generating functions of Poincar\'e polynomials
of moduli spaces of semi-stables sheaves
\cite{Yoshioka94,Yoshioka:k95}. Remarkably, these can be expressed as a specialization of the
Appell-Lerch function \cite{Bringmann:2010sd}, whose modular
properties were analyzed in \cite{semikhatov2005, ZwegersThesis}.

For gauge groups $SU(N)$, $N>2$, the partition
functions can be expressed in terms of generalizations of Appell
functions $\Phi_{\mu,\nu}$ as in Eq. \eqref{eq:DefPhimunu} below
\cite{Manschot:2011dj, Manschot:2011ym, Manschot:2014cca}. These are 
examples of higher depth mock modular forms, which have intriguing mathematical
properties \cite{Alexandrov:2016enp, Nazaroglu:2016lmr, Bringmann:2018cov, Bringmann:2026wea}. The structure of
non-holomorphic terms satisfies an elegant holomorphic anomaly 
equation, which can be understood from the physical path integral
\cite{Vafa:1994tf, Minahan:1998vr, Manschot:2017xcr, 
  Alexandrov:2019rth, Dabholkar:2020fde, Manschot:2021qqe}. The
non-holomorphic terms correspond to Abelian instantons for $U(1)$
subgroups of the gauge group $G$. While these do not saturate the BPS
bound, they do contribute to the path integral for four-manifolds with
$b_2^+=1$ \cite{Vafa:1994tf, Moore:1997pc}, which includes $\mathbb{CP}^2$. Therefore, the
partition functions for gauge group $G$ involve non-holomorphic terms involving theta
series for the (dual of the) root lattice of $G$ and its sublattices.

In the recent work \cite{Chattopadhyaya:2025bkq}, we determined the
modular completion of Appell functions $\widehat \Phi_{\mu,\nu}$. To this end, the
denominators in Eq. \eqref{eq:DefPhimunu} below are expanded using geometric sums, such
that the techniques for indefinite theta series can be used
\cite{Vigneras:1977, Alexandrov:2016enp, Nazaroglu:2016lmr,
  FunkeNotes:2018}. In particular, $\widehat \Phi_{\mu\nu}$ is
determined through their connection to indefinite theta series, and
then making appropriate replacements in the kernel in terms of
functions $E_L$ and $M_L$ of order $L$. These functions can be
 thought of as higher dimensional
 generalizations of the error function $E_1(x)={\rm Erf}(\sqrt{\pi}x)=
 2\int_{0}^{x}e^{-\pi t^2}dt$ and complementary error function ${\rm
   Erfc}$, which satisfy Vign\'eras' theorem
 for kernels of theta series \cite{Vigneras:1977}. In this way, the mock modular
 completion of the rank $N$ Vafa-Witten instanton partition functions
 is given in terms of these generalized error functions $M_L$ and Appell
 functions of depth $N-L$ where $L$ is an integer and $0<L<N$. This
 approach has been applied to other four-manifolds with $b_2^+=1$ in
 Ref. \cite{Alexandrov:2026lrd}. 
 
 In this sequel to \cite{Chattopadhyaya:2025bkq}, we express the
 generalized error functions $M_L$ appearing in $\widehat \Phi_{\mu,\nu}$ as
 iterated integrals in Theorem \ref{theorem}. This extends previous results for $N=1,2$
 \cite{Alexandrov:2016enp, bringmann2018higher, Manschot:2017xcr}.
 Including the lattice sums for $\widehat \Phi_{\mu,\nu}$, these integrals give rise to iterated Eichler
 integrals, which are iterated integrals whose integrands are products of
 modular forms. This elegant class of 
 integrals makes the modular properties manifest, and are also 
 natural from the perspective of physical path integrals
 \cite{Manschot:2017xcr, Alexandrov:2019rth, Dabholkar:2020fde}.  

The paper is structured as follows. 
Section \ref{review} reviews our previous work
\cite{Chattopadhyaya:2025bkq} on Appell functions and their relation
to mock modularity, and sets the notation for this paper. Using
results of the later sections, it also derives an iterative form for
the modular completion $\widehat \Phi_{\mu,\nu}$. Section
\ref{sec:error} gives a derivation of the generalized error
functions $M_N$ appearing in the modular completion of these Appell
functions as an $N$-dimensional iterated Eichler integral. Section
\ref{ANlattice} applies these results to the $A_N$ root lattice. 
Section \ref{ortho} explores the structure of the function
$E_N$ appearing in the completion of the Appell function for an $A_N$
lattice, with Section \ref{A9} discussing a few examples for $A_9$. Finally, Appendix
\ref{reviewVig} recalls the statement of 
Vign\'eras' theorem \cite{Vigneras:1977} for an even lattice. This simplification is
relevant to the application to $A_N$ lattices. For the $A_N$ lattice, Appendix \ref{app:ANlattice} provides a more explicit rederivation of
Theorem \ref{theorem} for $M_N$.

\subsection*{Acknowledgments}
We thank Sergey Alexandrov for discussions and comments on a draft. The work of A.C. is funded by Ramanujan Fellowship, RJF/2023/000070 offered by Anusandhan National Research Foundation (ANRF), India.

\section{Appell functions and mock modularity}\label{review}

\subsection{The classical Appell function}
The classical Appell function is a two-variable, quasi-elliptic
function, $\varphi:\mathbb{H}\times \mathbb{C}\times \mathbb{C}\rightarrow \mathbb{C}$,
\begin{equation}
  \begin{split}
	\varphi(\tau,u,v)&= e^{\pi i u}\sum_{n\in\mathbb{Z}}
        \frac{(-1)^nq^{n(n+1)/2}e^{2\pi i vn}}{1-e^{2\pi
            iu}q^n}.
\end{split}
\end{equation}
Zwegers' PhD thesis \cite{ZwegersThesis} studies the closely related Appell-Lerch sum
$\mu(\tau,u,v)=\varphi(\tau,u,v)/\vartheta_1(\tau,v)$, with $\vartheta_1$ the Jacobi theta series
  \be
\vartheta_1(\tau,v)=i\sum_{n\in \mathbb{Z}+\frac{1}{2}}
(-1)^{n-\frac{1}{2}} q^{\frac{n^2}{2}}e^{2\pi i n v}.
\ee
He describes the modular properties of this
function extensively using the Mordell
integral. An alternative method to derive these properties is to relate the
Appell function to an indefinite theta series. To this end, one expands
the denominator as a geometric series,
\begin{equation}
  \begin{split}
	\varphi(\tau,u,v)&=\frac{1}{2}\sum_{\begin{smallmatrix}n\in\mathbb{Z}\\
            s\in\mathbb{Z}\end{smallmatrix}}(-1)^nq^{n(n+1)/2+ns}e^{2\pi i
          (vn+us)}(\sgn(s+\epsilon)+\sgn(n+a)),
        \end{split}
\end{equation}
where $a={\rm Im}(u)/{\rm Im}(\tau)$ and $\epsilon$ satisfies
$0<\epsilon\ll 1$. This function does not transform as a modular form,
but a related non-holomorphic function $\widehat \varphi$ does. The latter is essentially
obtained by replacing $\sgn(n+a)$ by $E_1((n+a)\sqrt{2\tau_2})$ with
$\tau_2={\rm Im}(\tau)$, such that
the kernel satisfies the conditions of Vign\'eras' theorem
\cite{Vigneras:1977}. The theorem is reviewed in Appendix
\ref{reviewVig} for an even lattice. Note that for $\tau_2\to \infty$,
$E_1$ approaches $\sgn$, such that the contribution is subleading to
the holomorphic part.

In this way, one derives that completion $\widehat \varphi$ \cite{ZwegersThesis}, 
\be
\label{eq:whphi}
\widehat \varphi(\tau,\bar \tau,u,\bar u, v,\bar v)=\varphi(\tau,u,v)+\frac{i}{2}R(\tau,\bar
\tau, u-v,\bar u -\bar v)\, \vartheta_1(\tau,v),
\ee
with
\be
\begin{split}
&R(\tau,\bar \tau,u,\bar u)\\
&\qquad =\sum_{n \in
    \mathbb{Z}+1/2}
  (\sgn(n)-E_1((n+a)\sqrt{2\tau_2}))\,(-1)^{n-1/2}q^{-n^2/2}e^{-2\pi
    i n u}.
\end{split}
  \ee

\subsection{Modular completion of Appell function}\label{lastpaperrev}

In our previous work \cite{Chattopadhyaya:2025bkq}, we apply the 
approach explained for $\varphi$ to generalized Appell functions $\Phi_{\mu,\nu}$ and obtain their modular
completion $\widehat \Phi_{\mu,\nu}$. Following previous works
\cite{Alexandrov:2016enp, Nazaroglu:2016lmr}, these are expressed in
terms of generalized error functions $E_N$ and $M_N$. In this section, we briefly review these mock modular forms and set the notation for this paper.

We start by introducing the Appell functions for a general lattice. 
Let $\Lambda$ be an $N$-dimensional lattice with quadratic form
$Q$ and bilinear form $B$. Further, let $\{d_r\}$ be a set of $M\leq
N$ linearly independent vectors $d_r\in \Lambda$, which span the
sublattice $\Lambda_d\subseteq \Lambda$. We then introduce the Appell
function $ \Phi_{\mu,\nu}$,
\begin{eqnarray}
\label{eq:DefPhimunu}
  \Phi_{\mu,\nu}(\tau,u,v,\{ d_r \}) &= & e^{2\pi i B(\nu,u)}\sum_{k\in{\Lambda+{\mu-\nu}}}\frac{q^{Q(k)/2+B(\nu,k)}e^{2\pi iB({v},k)}}{\prod_{r=1}^M(1-e^{2\pi i B(d_r,u)} q^{B(d_r,k)})}.
\end{eqnarray}
where $u\in\Lambda_d\otimes\mathbb{C}$, $v\in \Lambda \otimes
\mathbb{C}$, $\mu\in \Lambda^*/\Lambda$ and $\nu\in \Lambda_d^*$. Note that
$\Phi_{\mu,\nu}$ is invariant under $\mu\mapsto \mu + \ell$ with $\ell
\in \Lambda$, but not under $\nu\mapsto \nu + \ell_d$ with $\ell_d
\in \Lambda_d$. We therefore introduce the function
$\Phi^+_{\mu,\nu}(\tau,u,v,\{ d_r \})$ \cite{Chattopadhyaya:2025bkq},
\begin{eqnarray}
\label{eq:DefPhi+munu}
  \Phi^+_{\mu,\nu}(\tau,u,v,\{ d_r \}) &= &   \Phi_{\mu,\tilde \nu}(\tau,u,v,\{ d_r \}),
\end{eqnarray}
with
\be
\tilde \nu= \nu - \sum_{r=1}^M \lfloor \nu_r + {\rm Im}(\sigma_r) \rfloor\,d_r, 
\ee
where
\be
\sigma={\bf D}^{-1}{\bf C}^T(v-u),
\ee
with ${\bf C}$ is the $N\times M$ matrix of innerproducts
$B(\alpha_i,d_r)$ for a set of basis vectors
$\{\alpha_j\}$ of $\Lambda$, and ${\bf D}$ is the $M\times M$ matrix of
innerproducts $B(d_r,d_s)$. This function is manifestly invariant under shifts
$\nu\mapsto \nu + \ell_d$ for $\ell_d\in \Lambda_d$. 

To state the modular completion, we let $d_r^* \in \Lambda_d^*$ be
dual vectors to $d_r$ in $\Lambda_d$. We let $\{d_{s_j}\}\subsetneq \{d_r\}$ be
subsets with $M-L<N$ elements with $j=1,\dots, M-L$. Moreover, let
$\nu_g, g=1,\dots, \CN_g$ be the glue vectors for gluing the
orthogonal lattices $\Lambda_d(\{d_{s_j}\})\subsetneq \Lambda_d$ and
$\Lambda_d(\{d_r^*\}/\{d^*_{s_j}\}) \subseteq \Lambda_d$. Here
$\Lambda_d(\{d_{s_j}\})$, respectively
$\Lambda_d(\{d_r^*\}/\{d^*_{s_j}\})$, is the sublattice of $\Lambda_d$ generated by the set
$\{d_{s_j}\}$, respectively $\{d_r^*\}/\{d^*_{s_j}\}$. We can then write the
modular completion $\widehat \Phi_{\mu,\nu}$ as 
\be
\label{eq:PhiHatML2}   
\begin{split}  
  &\widehat\Phi_{\mu,\nu}(\tau,\bar \tau, u, \bar u, v,\bar v, \{ d_r \})=\Phi_{\mu,\nu }^+(\tau,u,v,\{ d_r \}) \\
&\qquad + \sum_{L=1}^{M}  \sum_{\{d_{s_j}\} \subsetneq \{d_r\}}
 \sum_{g=1}^{\CN_g} 2^{-L} R_{L,\nu_g^\perp+\nu^\perp}(\{
 d^*_r\}/\{d^*_{s_j}\},\Lambda; \tau,\bar \tau, u^\perp-v^\perp,\bar
 u^\perp -\bar v^\perp)\\
&\qquad  \times \Phi^+_{\mu,\nu_g^{||}+\nu^{||}}(\tau,u^{||},v,\{d_{s_j}\}),
\end{split}
\ee
where $u^{||}$ (respectively $u^\perp$) denotes the component of $u$
parallel (respectively orthogonal) to the hyperplane spanned by $\{d_{s_j}\}$, and the function 
$R_{L,\nu}$ is defined in terms of the generalized error
function $M_L$ by
\be
\label{eq:RLdef}
\begin{split}
R_{L,\nu}(\{d_v^*\},\Lambda; \tau,\bar \tau,u,\bar u)&=\sum_{k\in \Lambda_d(\{d_v^*\})+\nu}
M_L(\{d_v^*\},\sqrt{2\tau_2}(k-{\rm Im}(u)/\tau_2); \Lambda)\\
&\times q^{-Q(k)/2}e^{2\pi i B(u,k)}.
\end{split}
\ee 
Combined with Theorem \ref{theorem}, this provides a useful
characterization of the modular completion of the Appell
function. Using that theorem, we derive the following proposition:

\begin{prop}
The modular completion $\widehat \Phi_{\mu,\nu}$ of a depth $M$ Appell
function $\Phi_{\mu,\nu}$ is given in terms of depth $M-1$ Appell functions as
\be
\label{eq:AppellComplete}
\begin{split}
&\widehat \Phi_{\mu,\nu}(\tau,\bar
\tau, u,\bar u, v, \bar v,\{d_r\})=\Phi^+_{\mu,\nu}(\tau, u, v, \{d_r\})\\
&\quad +\frac{i}{2} \sum_{j=1}^M
\sum_{l=1}^{\CN_g^{(j)}}  \int_{-\bar \tau}^{i\infty}
dw\,\frac{\Theta_{\nu_l^{||}+\nu^{||}}(w, (u-v)_j-(\tau+w)\,{\rm Im}(u-v)_j/\tau_2;\Lambda_d(d_j^*))}{\sqrt{-i(w+\tau)}}\\
& \qquad \qquad \times \,e^{\pi i (w+\tau)
  {\rm Im}( u-v)_j^2/\tau_2^2}\, \widehat
  \Phi _{\mu,\nu_l^{||}+\nu^{||}}(\tau,-w,u^{||},\bar u^{||},v,\bar v; \{d_r\}/d_j),
\end{split}
\ee
where $u_j$ is the component of $u$ parallel to $d_j^*$,
$\CN^{(j)}_g$ the number of glue vectors $\nu_l$ gluing the lattices and
$\Lambda_d(d_j^*)$ and $\Lambda_d(\{d_{v\perp j}^*\}/d_j^*)$, and
$\Theta_\nu$ the sum over the one-dimenional sublattice
$\Lambda_d(d^*_j)\subset \Lambda_d$, 
\be
\Theta_{\nu}(\tau,u;\Lambda_d(d_j^*))=\sum_{k\in \Lambda_d(d^*_j)+\nu}
q^{Q(k)/2}\,e^{2\pi i B(u, k)}.
\ee
Moreover, $||$ in the subscripts of $\Theta$, respectively $\Phi^+$,
indicates the projection of the variable to the hyperplane spanned by
the lattice $\Lambda_d(d_j^*)$,
respectively $\Lambda_d(\{d_r\}/d_j)$
\end{prop}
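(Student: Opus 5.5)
We start from the explicit completion \eqref{eq:PhiHatML2} and rewrite every generalized error function $M_L$ using Theorem \ref{theorem}. That theorem expresses $M_L(\{d_v^*\},x;\Lambda)$ as an $L$-fold iterated integral over variables $w_1,\dots,w_L\in\mathbb{H}$. The outermost integration, over $w_1$ running from $-\bar\tau$ to $i\infty$, carries a one-dimensional Gaussian factor in the direction of a single dual vector $d_j^*$. The remaining $(L-1)$-fold integral is again of the form of Theorem \ref{theorem}, namely $M_{L-1}$ for the projected set $\{d_v^*\}/d_j^*$ with $w_1$ playing the role of $-\tau$.

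\textbf{Step 1: split the lattice sums.} Insert this form of $M_L$ into $R_{L,\nu}$ of \eqref{eq:RLdef}. Decompose the lattice $\Lambda_d(\{d_v^*\})$ into the one-dimensional lattice $\Lambda_d(d_j^*)$ and its orthogonal complement, with glue vectors $\nu_l$. Because the quadratic form splits orthogonally, the sum over $k$ factorizes into two pieces:
\begin{itemize}
\item a sum over $k_j\in\Lambda_d(d_j^*)+\nu_l^{||}+\nu^{||}$, which combined with the $w_1$-kernel gives the one-dimensional unary theta series $\Theta$ at argument $w$;
\item a sum over the complement, which reconstitutes $R_{L-1}$ with $\tau$ replaced by $-w$.
\end{itemize}
The elliptic argument $(u-v)_j-(\tau+w){\rm Im}(u-v)_j/\tau_2$ and the factor $e^{\pi i(w+\tau){\rm Im}(u-v)_j^2/\tau_2^2}$ arise from completing the square. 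The shift $k-{\rm Im}(u)/\tau_2$ in the argument of $M_L$ is converted into a holomorphic elliptic variable at the point $w$; this is the standard manipulation used for $E_1$/Erfc in the $L=1$ case.

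\textbf{Step 2: regroup the sums.} Exchange the order of summation in \eqref{eq:PhiHatML2}. For fixed $j$, collect all terms with $L\ge 1$ whose subset $\{d^*_r\}/\{d^*_{s_j}\}$ contains $d_j^*$. The leftover sum, over subsets of $\{d_r\}/d_j$ and over $L-1\ge 0$, has exactly the structure of \eqref{eq:PhiHatML2} for the depth $M-1$ Appell function with vectors $\{d_r\}/d_j$, evaluated at $(\tau,-w)$. The $L-1=0$ term gives $\Phi^+$, and the higher terms give its completion. This identifies the integrand with $\widehat\Phi_{\mu,\nu_l^{||}+\nu^{||}}(\tau,-w,\dots;\{d_r\}/d_j)$.

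\textbf{Step 3: fix the combinatorial weights.} A subset of size $L$ is reached from $L$ different choices of $j$. Theorem \ref{theorem} should therefore present $M_L$ as a symmetrized sum over which vector is integrated first, carrying weight $1/L$ or an equivalent combinatorial factor. Together with the prefactor $2^{-L}=\tfrac12\cdot 2^{-(L-1)}$, this must produce exactly $\tfrac{i}{2}$ times the $2^{-(L-1)}$ normalization of the depth $M-1$ completion. The factor $i$ and the measure $1/\sqrt{-i(w+\tau)}$ come from the one-dimensional Eichler integral representation of ${\rm Erfc}$, as in \eqref{eq:whphi}.

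\textbf{Main obstacle.} The hardest part will be Steps 2 and 3: showing that the glue vectors compose consistently and that the weights recombine exactly. Concretely, the gluing for $\Lambda_d(d_j^*)\oplus\Lambda_d(\{d^*_{v\perp j}\}/d_j^*)$, composed with the gluing at depth $M-1$, must reproduce the gluing $\nu_g$ used at depth $M$. Likewise, the $\tilde\nu$ shift entering $\Phi^+$ must be compatible with the projection $u^{||}$. I would handle the gluing first by a coset-counting argument in $\Lambda_d^*/\Lambda_d$. I would then check the weights against the known $M=2$ case to pin down the normalization.
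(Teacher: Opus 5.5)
Your route is sound, but it differs from the paper's. The paper never substitutes the integral representation into $R_{L,\nu}$. Instead it applies $\partial_{\bar\tau}$, holding ${\rm Im}(u)/\tau_2$ fixed, and uses Corollary \ref{cor:MNholanom}, so the outermost integral collapses to its endpoint $w=-\bar\tau$. It then does exactly your regrouping at the level of the holomorphic anomaly, $\partial_{\bar\tau}\widehat\Phi_{\mu,\nu}=\frac{i}{2\sqrt{2\tau_2}}\sum_j\sum_l(\cdots)\,\Theta(-\bar\tau,\dots)\,\widehat\Phi(\dots;\{d_r\}/d_j)$, and integrates back in $\bar\tau$. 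The integration constant is fixed, implicitly, by $\widehat\Phi\to\Phi^+$ as $\bar\tau\to -i\infty$. You instead work with the integrated identity itself, namely the one-step recursion \eqref{MnAn} together with \eqref{eq:mNwithz} for inner integrals starting at $w$. This avoids the integration-constant argument and gives the result directly as an identity of integrands. The cost is that you must check by hand that the inner $M_{L-1}$, evaluated at $\sqrt{-i(w+\tau)/2\tau_2}\,x$, equals $R_{L-1}$ with ${\rm Im}(u)/\tau_2$ held fixed and with $\bar\tau$ replaced by $-w$ (not $\tau$, as you wrote). In the paper's route everything sits at the same $(\tau,\bar\tau)$ after differentiation, so this identification is immediate, and the shift $\bar\tau\to -w$ is just the name of the integration variable.

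Your Step 3 needs correcting: there is no weight $1/L$. Grouping \eqref{MnAniter} by $\sigma_1=j$ gives $M_L=\sum_{j}(\text{outer factor})_j\,M_{L-1}(\{c_{l\perp j}\},\dots)$ with unit weight for each $j$, so every $M_L$ splits into $L$ pieces. The pairs $(T,\,j\in T)$ are in bijection with $(j,\,T\setminus\{j\})$, so each piece is used exactly once in your regrouping; inserting $1/L$ would give the wrong answer. The only normalization bookkeeping is $2^{-L}=\tfrac12\cdot 2^{-(L-1)}$ together with the $i$ from $m_1$. So what you call the main obstacle is actually immediate. The genuinely delicate points are the composition of glue vectors and the compatibility of $\tilde\nu$ with the projection, which the paper also leaves implicit.

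Finally, when you complete the square in Step 1, the prefactor $u_j$ of $m_1$ leaves a factor linear in $(k-{\rm Im}(u-v)/\tau_2)_j$ inside the unary sum. The displayed $\Theta$ and Corollary \ref{cor:MNholanom} both drop this factor, so expect that mismatch with the stated formula.
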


\begin{proof}
Using Corollary \ref{cor:MNholanom}, it is straightforward to determine the
anti-holomorphic derivative of $R_{L,\nu}$ to $\bar \tau$. If we let
${\rm Im}(u)/\tau_2$ be independent of $\bar \tau$, this reads
\be
\begin{split}
&\partial_{\bar \tau}R_{L,\nu}(\{d_v^*\},\Lambda; \tau,\bar \tau,u,\bar
u)= \\
&\quad \frac{i}{\sqrt{2\tau_2}} \sum_{j=1}^L \sum_{l=1}^{\CN^{(j)}_g}
e^{-2\pi \frac{{\rm Im}(u_j)^2}{\tau_2}}\, \Theta_{\nu_l^{||}+\nu^{||}}(-\bar \tau, \bar u_j;\Lambda_d(d_j^*))\\
&\qquad \qquad \times
R_{L-1,\nu_l^\perp+\nu^\perp}(\{d_{v\perp j}^*\}/d_j^*,\Lambda;\tau,\bar
\tau,u^\perp,\bar u^\perp).
\end{split}
\ee
where the different ingredients are introduced in the Proposition. 
The holomorphic anomaly of the completed Appell function then becomes
\be
\begin{split}
&\partial_{\bar \tau} \widehat \Phi_{\mu,\nu}(\tau,\bar
\tau, u,\bar u, v, \bar v,\{d_r\})  = \\
& \quad \frac{i}{2\sqrt{2\tau_2}}\sum_{j=1}^M
\sum_{l=1}^{\CN_g^{(j)}} e^{-2\pi \frac{{\rm
      Im}(u-v)_j^2}{\tau_2}}\, \Theta_{\nu_l^{||}+\nu^{||}}(-\bar
\tau, \bar u_j-\bar v_j;\Lambda_d(d_j^*))\\
&\qquad \qquad \times \widehat \Phi^+_{\mu,\nu_l^{||}+\nu^{||}}(\tau,\bar
\tau,u^{||},\bar u ^{||},v,\bar v; \{d_r\}/d_j).
\end{split}
\ee
 We integrate with respect to
$\bar \tau$ to express the full modular completion $\widehat
\Phi_{\mu,\nu}$ as Eq. \eqref{eq:AppellComplete} of the Proposition.
\end{proof}

\section{Generalized error functions  $E_N$ and $M_N$}\label{sec:error}
In this section we express the error functions $M_N, E_N$
corresponding to a positive definite lattice $\Lambda$ of dimension
$N$ as iterated period integrals. Throughout this section we take
${\rm dim}(\Lambda)=N$.

\subsection{Introducing $E_P$ and $M_P$}
First we recall the definition for the generalized
error function $E_P$, $P\le N$, and the generalized complementary error function $M_P$ \cite{Alexandrov:2016enp},
\begin{equation}\label{EMp}
\begin{split}
  &E_{P}(\{c_l\},x;\Lambda) = \int_{\langle c_1,\dots ,c_{P}\rangle} \prod_{l=1}^{P} {\rm sgn}(B(c_l,y))\,e^{-\pi Q(y-x^{||})}\, d^{P}y, \\ 
& M_{P}(\{c_l\},x;\Lambda) = \sqrt{\Delta(\{c_l^{\star}\})}\left(\frac{i}{\pi}\right)^{P}\int_{\langle
  c_1,\dots ,c_{P}\rangle-ix^{||}} \prod_{l=1}^{P}
\frac{1}{B(c_l^\star,y)} \\
&\qquad \qquad \qquad \qquad \times e^{-\pi Q(y)-2\pi iB(y,x)}
d^{P}y.
\end{split}
\end{equation}
Here $x^{||}$ is the orthogonal projection of $x$ to the plane of the set
of vectors $\{c_j\}$ and $c_j^\star $ is the vector dual to $c_j$ also in
the plane spanned by the set of vectors $\{c_j\}$, ie
$B(c_i,c_j^\star)=\delta_{ij}$, $i,j=1,\dots,P,$ and $c_j^\star$ is a
linear combination of $c_i$. Finally, $\Delta(\{c_j^{\star}\})$ is the
determinant of the Gram matrix formed of the bilinear products $B(c_j^\star,c_k^\star)$ of
$c_j^\star$.\footnote{Note the subtle distinction between $*$ used in
  Eq. \eqref{eq:PhiHatML2} and $\star$ used here.} Note
 that for a given set of $P$ vectors $\{c_j\}$, the error functions $E_P$ and $M_P$ do not have an explicit
 dependence on the full $N$-dimensional vector $x\in \Lambda\otimes
 \mathbb{R}$, but only on the component $x^{||}$ parallel to
 $\{c_l\}$. Thus as function of $\{c_l\}$ and $x$, $E_P$ and $M_P$
 depend on a total of $P(P+1)$
 variables. However, $E_P$ and $M_P$ are independent of the norm of
 $\{c_l\}$, and are invariant under an $SO(P)$ rotation in the plane
 of the $\{c_l\}$. As a result, the number
 of independent arguments of $E_P$ and $M_P$ is $P(P+1)/2$.

For the evaluation of the integrals (\ref{EMp}), it is natural to parametrize the integration variables $y$ in terms of
an orthonormal basis for the hyperplane spanned by $\{c_j\}$. While the choice of basis is not unique, we will introduce
natural bases using the Gram-Schmidt orthogonalization process in
Section \ref{sec:ui} below. In this
way, we introduce a set of $P$ scalars $\{u_l\}$ which specify
$x^{||}$. With these $u_l$'s specified, we
 may omit the dependence on the lattice $\Lambda$ from the notation.

We furthermore recall that the function $E_P$ can be expressed in terms of $M_P$ as follows \cite[Prop. 3.11]{Nazaroglu:2016lmr},\cite{Alexandrov:2016enp}:
 \begin{equation}
   \label{eq:EPMP}
   \begin{split}
	E_P(\{c_l\},x;\Lambda) &=\sum_{L=0}^P \sum_{v_1,\dots
          ,v_l,w_1,\dots, w_{P-L}\in\{1,\dots,P\}}
        M_L(\{c_{v_l}\},x;\Lambda)\\
        &\quad \times \prod_{j=1}^{P-L}\sgn(B(c_{w_j}^{\perp V_L},x)),
\end{split}
      \end{equation}
where $V_L$ is the hyperplane formed of $\{c_{v_i}\}$, $i=1,\dots, L$ and $c_{w}^{\perp V_L}$ is the component of $c_w$ orthogonal to $V_L$.
This is an important ingredient for derivation of
\eqref{eq:PhiHatML2} in \cite{Chattopadhyaya:2025bkq}. In Section \ref{ortho}, we determine these functions explicitly for the $A_N$
lattice.

We may assume that the set of $P$ arguments $\{c_l\}$ splits into
two mutually orthogonal sets, $\{c_l\}=\{c_{r_i}\} \cup \{c_{s_j}\}$,
$i=1,\dots,L$ and $j=1,\dots,P-L$ elements, such that $B(c_{r_i},c_{s_j})=0$.
It then follows from the definition (\ref{EMp}) that $E_P$ and
$M_P$ factorize for such choices of $\{c_l\}$, 
\be
\label{eq:factorization}
\begin{split}
&E_P(\{c_l\},x;\Lambda)=E_{P-L}(\{c_r\},x;\Lambda)\,
E_L(\{c_s\},x;\Lambda),\\
&M_P(\{c_l\},x;\Lambda)=M_{P-L}(\{c_r\},x;\Lambda)\,
M_L(\{c_s\},x;\Lambda).\\
\end{split}  
\ee

\subsection{Construction of the scalars $\{u^{(j)}_l\}$}\label{sec:ui}

As mentioned in the previous subsection, we use the  Gram-Schmidt orthogonalization process to
introduce a set of orthonormal basis vectors $\{V_1,\dots ,V_P\}$ for
the hyperplane spanned by $\{c_1,\dots ,c_P\}$.
In the following sections, it will be important to keep track of the
starting vector $c_j$ for the process, so we add the superscript $(j)$ to $\{V^{(j)}_l\}$.
Motivated by the standard basis of the $A_N$ lattice, we set up a
Gram-Schmidt process with distinct element $c_j$, and orthonormal
bases for $\{c_1,\dots,c_{j-1}\}$, and $\{c_{j+1},\dots,c_{P}\}$. We
introduce for the component of $c_l$ orthogonal to $c_j$ the notation,
\be
\label{eq:clperpj}
  c_{l\perp j}:= c_l-\frac{B(c_l,c_j)}{Q(c_j)}c_j.
  \ee

\begin{enumerate}
\item We start from a vector $c_j$ as the first vector, which gives us
  the unit vector $V_1^{(j)}=\frac{c_j}{\sqrt{Q(c_j)}}$.
\item We consider then the vector $c_{j-1}$ for the second vector
  $V_2^{(j)}=\frac{c_{(j-1)\perp j}}{\sqrt{Q(c_{(j-1)\perp
        j})}}$. For $V_3^{(j)}$, we orthonormalize $c_{j-2}$ with
  respect to $V_1^{(j)}$ and $V_2^{(j)}$, and so on. In this way, we
  obtain the basis elements $\{V_1^{(j)},\dots, V_{j}^{(j)}\}$.
\item We then continue with the vector $c_{j+1}$ to determine the $(j+1)$-th
 basis element $V_{j+1}^{(j)}$ by orthonormalize $c_{j+1}$ with respect to
 $V_1^{(j)},\dots, V_j^{(j)}$. Proceeding in this way till $c_P$ as
 $P$-th vector, we obtain the set $\{V_{j+1}^{(j)},\dots , V_{P}^{(j)}\}$. 
\item Thus given the set $\{c_l\}$, we have constructed $P$ ordered
  sets $V^{(j)}, j=1,\dots, P$ of $P$ orthonormal vectors,
\be\label{Vj1n}
V^{(j)}= \{V_1^{(j)},\dots , V_{j}^{(j)},V_{j+1}^{(j)},\dots , V_{P}^{(j)}\},
\ee
spanning the plane of $\{c_l\}$.
\end{enumerate}

Since $M_P$ \eqref{EMp} only depends on the component $x^{||}$ of
$x\in\Lambda\otimes \mathbb{C}$, we can replace the
argument $x$ in terms of a set of $P$ scalars,
$\{u_l^{(j)}\}$, defined through the orthonormal basis $V^{(j)}$,
\begin{eqnarray}\label{uiset}
u_l^{(j)}=B(V_l^{(j)},x)=B(V_l^{(j)},x^{||}).
\end{eqnarray}
Moreover for an arbitrary point $y$ in the plane spanned by $\{c_l\}$, we introduce
coordinates $y_l^{(j)}$  with respect to basis $V_l^{(j)}$, such that  
\begin{eqnarray}
\label{yiset}
y=\sum_{l=1}^P y_l^{(j)}V_l^{(j)},\quad \forall \; j.
\end{eqnarray}

A set of scalars $u_l^{(j)}$ specifies $x^{||}$. We find it convenient
to sometimes use these as arguments of $E_P$ and
$M_P$. We therefore introduce these functions with alternative set of arguments
\be
\label{eq:scalar_not}
\begin{split}
&E_P(\{u_l^{(j)}\},\{c_l\}; \Lambda):=E_{P}(\{c_l\},x;\Lambda), \\
&M_P(\{u_l^{(j)}\},\{c_l^\star\}; \Lambda):=M_P(\{c_l\},x;\Lambda),
\end{split}
\ee
where we recall that the $\{c_l^\star\}$ are the vectors dual to
$\{c_l\}$ in the hyperplane spanned by the $\{c_l\}$. Thus only for
$P=N$, $c_l^\star$ is equivalent to $c_l^*$. For the integrals to be
well-defined, we require that $u_l^{(j)}\neq 0\, \forall \; l,j$.
\vspace{.3cm}
\\
\noindent
{\bf Remark:} Clearly not all $u^{(j)}_l$'s are independent since
these parametrize $x^{||}$. As mentioned below Eq. \eqref{EMp}, the number of independent arguments required for $M_N$ is
$N(N+1)/2$. Indeed,  Refs
\cite{Alexandrov:2016enp},\cite{Manschot:2017xcr} defined $M_2$ with
three arguments $\alpha, u_1$ and $u_2$, which are related to
the variables introduced here as
$B(c_1,c_2)/\sqrt{Q(c_1)\,Q(c_2)}=\alpha$,
$u^{(2)}_2 =u_1$, $u_1^{(2)}=u_2$, $u_1^{(1)}=\frac{u_1+\alpha
  u_2}{\sqrt{1+\alpha^2}}$ and $u_2^{(1)}=\frac{u_2-\alpha
  u_1}{\sqrt{1+\alpha^2}}$. Similarly, $M_3$ can be introduced with
six arguments \cite{Alexandrov:2019rth}.

\subsection{Iterated integrals $m_P$}
Assuming non-vanishing variables, $u_j\in \mathbb{R} \setminus 0$ for
all $j=1,\dots,P$, we  introduce
the iterated integral $m_P$, 
\be
\label{eq:mPwithouttau}
\begin{split}
m_P(u_1,\dots ,u_P)&=\left(\prod_{j=1}^P (-u_j)\right) \int_1^\infty
d\omega_1\int_{\omega_1}^\infty d\omega_2\, \dots
\int_{\omega_{P-1}}^\infty d\omega_P\\
&\quad \times \frac{e^{-\pi \sum_{l=1}^P\omega_l u_l^2}}{\sqrt{\prod_{k=1}^P \omega_k}}.
\end{split}
\ee
For our application to the completion of mock modular forms, it is convenient to
make a $\tau$-dependent change of variables from $\omega_l$ to
$w_l$ defined as,
\be
w_l=2i\tau_2\omega_l-\tau.
\ee
This expresses $m_P$ as \footnote{Note our definition
  (\ref{eq:mPwithouttau}) gives rise to the factor $i^P$. As a result,
   the definition for $m_2$ differs by a sign compared to the
  definition in \cite{Alexandrov:2016enp, Manschot:2017xcr}.}
\begin{equation}
\label{mpdef}
\begin{split}   
& m_P(u_1,\dots ,u_P) = 
 \left(\frac{i}{\sqrt{2\tau_2}}\right)^P\prod_{j=1}^P
 (u_j\,q^{u_j^2/4\tau_2})\\
 &\qquad \times \int_{-\bar\tau}^{i\infty}dw_1\int_{w_1}^{i\infty}dw_2
 \,\dots \int_{w_{P-1}}^{i\infty} dw_P
 \frac{e^{i\pi\sum_{l=1}^P
     w_lu_l^2/2\tau_2}}{\sqrt{\prod_{k=1}^P (-i(w_k+\tau))}}.
\end{split}
\end{equation} 
Note that while $\tau$ and $\bar \tau$ appear on the
right hand side of this definition, $m_P$ is independent of
$\tau$ and $\bar \tau$. This expression for $m_P$ is useful in
the modular completion of the Appell functions, since the factor
$q^{\sum_l u_l^2/4\tau_2}$ absorbs the factors of
$q^{-Q(k)/2}e^{2\pi i B(u,k)}$ in
\eqref{eq:RLdef}. This form of the generalized error functions
manifests the holomorphic anomaly and naturally occurs as path
integrals in theoretical physics \cite{Manschot:2017xcr,
  Alexandrov:2019rth, Dabholkar:2020fde}.

The following Proposition gives a variant of (\ref{mpdef}) with a generic
starting point $z\in \mathbb{H}$ for the $w_1$-integral.
\begin{prop}
The iterated integral $m_P$ satisfies
\begin{equation}
  \label{eq:mNwithz}
\begin{split}
  &m_P\left(\sqrt{\tfrac{-i(z+\tau)}{2\tau_2}}u_1,\dots ,\sqrt{\tfrac{-i(z+\tau)}{2\tau_2}}u_P\right)
  = 
 \left(\frac{i}{\sqrt{2\tau_2}}\right)^P\prod_{j=1}^P
 (u_j\,q^{u_j^2/4\tau_2})\\
 &\qquad \times \int_{z}^{i\infty}dw_1\int_{w_1}^{i\infty}dw_2\,
 \dots \int_{w_{P-1}}^{i\infty} dw_P
 \frac{e^{i\pi\sum_{l=1}^P
     w_lu_l^2/2\tau_2}}{\sqrt{\prod_{k=1}^P (-i(w_k+\tau))}}.
\end{split}
\end{equation}
\end{prop}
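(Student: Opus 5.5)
The identity follows from one change of variables in the $w$-integrals, combined with a contour rotation. The idea is to reduce the right hand side of \eqref{eq:mNwithz} directly to the defining integral \eqref{eq:mPwithouttau}, evaluated at the rescaled arguments
\[
\tilde u_l=\sqrt{\tfrac{-i(z+\tau)}{2\tau_2}}\,u_l .
\]
Because $\tilde u_l$ is complex, I first note that \eqref{eq:mPwithouttau} still makes sense for such arguments. We have ${\rm Re}\,\tilde u_l^2={\rm Im}(z+\tau)\,u_l^2/2\tau_2>0$, since $z,\tau\in\mathbb{H}$. So the $\omega$-integrals converge absolutely and define the (analytic) extension of $m_P$ to these arguments. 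The case $z=-\bar\tau$, where $-i(z+\tau)=2\tau_2$ and $\tilde u_l=u_l$, reproduces \eqref{mpdef}, which serves as a consistency check.

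The first step is to substitute
\[
w_l=-\tau+(z+\tau)\,\omega_l,\qquad \omega_l\in[1,\infty),
\]
in every variable. This maps $\omega_1=1$ to $w_1=z$. It also preserves the nested ordering $\omega_{l}\ge\omega_{l-1}$ along the ray $\{-\tau+(z+\tau)t:\ t\ge t_0\}$ starting at $w_{l-1}$. That ray runs to infinity in the direction $\arg(z+\tau)\in(0,\pi)$, rather than vertically to $i\infty$. I therefore need to justify rotating each contour from the vertical ray to this slanted ray.

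This rotation is the only step that needs real care, and I expect it to be the main obstacle. The integrand carries the factor $e^{i\pi w_l u_l^2/2\tau_2}$, which decays like $e^{-\pi {\rm Im}(w_l)u_l^2/2\tau_2}$. On a large arc between the directions $\pi/2$ and $\arg(z+\tau)$, both strictly inside $(0,\pi)$, we have ${\rm Im}\,w_l\gtrsim R$. The arc contribution therefore vanishes as $R\to\infty$, and the algebraic factors $1/\sqrt{-i(w_k+\tau)}$ are harmless there. I would perform the rotation on the innermost integral $w_P$ first, with the outer variables held fixed, and then proceed outward, using Fubini at each step, which the absolute convergence permits. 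Along the way I must check that the branch of $\sqrt{-i(w+\tau)}$ (principal branch, real part positive) stays continuous. This holds because $-i(w+\tau)$ has positive real part for all $w$ in the closed region swept out.

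After the substitution the computation is routine. We have $e^{i\pi w_l u_l^2/2\tau_2}=e^{-i\pi\tau u_l^2/2\tau_2}\,e^{i\pi(z+\tau)\omega_l u_l^2/2\tau_2}$. The first factor cancels $q^{u_l^2/4\tau_2}=e^{i\pi\tau u_l^2/2\tau_2}$, and the second equals $e^{-\pi\omega_l\tilde u_l^2}$. We also have $dw_l=(z+\tau)\,d\omega_l$ and $\sqrt{-i(w_l+\tau)}=\sqrt{-i(z+\tau)}\,\sqrt{\omega_l}$; the latter is valid on the principal branch because $\omega_l>0$. Writing $z+\tau=i\cdot(-i(z+\tau))$, the prefactor attached to each variable becomes
\[
\frac{i}{\sqrt{2\tau_2}}\,u_l\,\frac{z+\tau}{\sqrt{-i(z+\tau)}}=-\sqrt{\tfrac{-i(z+\tau)}{2\tau_2}}\,u_l=-\tilde u_l .
\]
Collecting these factors gives exactly $\prod_l(-\tilde u_l)\int_1^\infty d\omega_1\cdots\int_{\omega_{P-1}}^\infty d\omega_P\, e^{-\pi\sum_l\omega_l\tilde u_l^2}/\sqrt{\prod_k\omega_k}=m_P(\tilde u_1,\dots,\tilde u_P)$, which is the left hand side of \eqref{eq:mNwithz}.
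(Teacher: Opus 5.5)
Your proposal is correct and is the paper's proof. The paper likewise substitutes the rescaled arguments into \eqref{eq:mPwithouttau} and changes variables via $w_l=(z+\tau)\omega_l-\tau$. What you add is the justification the paper leaves implicit: convergence of the $\omega$-integrals for complex $\tilde u_l$ (since ${\rm Re}\,\tilde u_l^2>0$), the rotation of the slanted rays back to paths ending at $i\infty$, and reading the denominator as $\prod_k\sqrt{-i(w_k+\tau)}$.
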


\begin{proof}
The change of arguments $u_l\to
\sqrt{\tfrac{-i(z+\tau)}{2\tau_2}}u_l$ on the right hand side of \eqref{eq:mPwithouttau},
together with
the change of variables $w_l=(z+\tau)\omega_l-\tau$ gives
the desired result \eqref{eq:mNwithz}. 
\end{proof}

We let ${\rm Sym}(P)$ be the set of all permutations
$\sigma=(\sigma_1,\dots, \sigma_P)\in {\rm
  Sym}(P)$ of $P$ elements, with $\sigma_l\in \{1,\dots,P\}$.
Recall that an $(R,S)$-shuffle is a permutation $\sigma \in {\rm
  Sym}(R+S)$ such that $\sigma_{1}< \dots < \sigma_R$ and
$\sigma_{R+1}<\dots < \sigma_{R+S}$. We denote the set of
$(R,S)$-shuffles by ${\rm Shuffles}(R,S)$. This set has $(R+S)!/(R!\,S!)$ elements.

Since $m_P$ is an iterated integral, multiplication is organized by
the shuffle product \cite{Chen:1977oja}:
\begin{prop}
The product of iterated integrals $m_R$ and $m_S$ satisfies
\be
\label{eq:shuffles}
\begin{split}
&  m_R(u_1,\dots,u_R)\, m_S(u_{r+1},\dots,u_{R+S})= \\
&\qquad \sum_{\sigma \in {\rm
    Shuffles}(R,S)}\, m_{R+S}(u_{\sigma_1},\dots,u_{\sigma_{R+S}}).
\end{split}
  \ee
\end{prop}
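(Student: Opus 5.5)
The plan is to work directly from the definition \eqref{eq:mPwithouttau}. Write $m_P(u_1,\dots,u_P)=\bigl(\prod_j(-u_j)\bigr)\, I_P$, where
\be
I_P=\int_{\Delta_P} \prod_{l=1}^P f_l(\omega_l)\, d\omega_1\cdots d\omega_P ,
\qquad
f_l(\omega)=\frac{e^{-\pi\omega u_l^2}}{\sqrt{\omega}} ,
\ee
and $\Delta_P=\{1\le\omega_1\le\omega_2\le\dots\le\omega_P\}$. The prefactors $\prod_j(-u_j)$ on both sides of \eqref{eq:shuffles} agree: the left-hand side carries $\prod_{j=1}^{R}(-u_j)\prod_{j=R+1}^{R+S}(-u_j)$, and every term on the right carries the full product over $\{u_1,\dots,u_{R+S}\}$ in permuted order. (I read the index $r$ in the statement as $R$.) So it suffices to prove the shuffle identity for the integrals alone.

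For the integrals, I would write the product $I_R\, I_S$ as a single integral of the product of integrands over $\Delta_R\times\Delta_S\subset[1,\infty)^{R+S}$. The integrand factorizes, so Fubini applies once absolute convergence is checked. Convergence holds because each $f_l$ decays exponentially at $\infty$ (here $u_l\neq0$ is used) and is bounded on $[1,\infty)$. Next I would decompose $\Delta_R\times\Delta_S$, up to a measure-zero set where some $\omega_i$ from the first block equals some $\omega'_k$ from the second, into disjoint open regions. Each region corresponds to one relative ordering of the merged variables that preserves the internal orders of both blocks. These orderings are in bijection with ${\rm Shuffles}(R,S)$.

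On the region labelled by $\sigma$, I would relabel the merged, totally ordered variables as $1\le\eta_1\le\dots\le\eta_{R+S}$. The variable in position $k$ carries the weight $f_{\sigma_k}$, with the convention (possibly up to replacing $\sigma$ by $\sigma^{-1}$) that the argument list of $m_{R+S}$ is $(u_{\sigma_1},\dots,u_{\sigma_{R+S}})$. That piece is then exactly $I_{R+S}$ with arguments permuted accordingly. Summing over the regions gives \eqref{eq:shuffles}.

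The only delicate point is bookkeeping. The statement sums over $\sigma$ with $\sigma_1<\dots<\sigma_R$, so I must check that the argument sequence $(u_{\sigma_1},\dots)$ ranges over all interleavings of $(u_1,\dots,u_R)$ and $(u_{R+1},\dots,u_{R+S})$ preserving both internal orders. Since inversion is a bijection between these two descriptions of shuffles, the sum over the set is the same under either convention, so the identity holds as stated.

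Alternatively, I could give an inductive proof via the recursion $I_P(u_1,\dots;a)=\int_a^\infty f_1(\omega)\,I_{P-1}(u_2,\dots;\omega)\,d\omega$ with a variable lower limit $a$. Differentiating $I_R I_S$ in $a$ yields the standard Chen recursion for shuffles. I would mention this only as a check, since the region decomposition is cleaner.
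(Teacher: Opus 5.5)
\textbf{The approach is right, but the final bookkeeping step fails.} Your region decomposition of $\Delta_R\times\Delta_S$ is the standard proof of Chen's shuffle product, which is all the paper appeals to here. The analytic ingredients are fine: absolute convergence from $u_l\neq0$, Fubini, the measure-zero overlaps, and the matching prefactors $\prod_j(-u_j)$. On the region where the variable in slot $k$ carries the weight $f_{\rho_k}$, the constraint is that the slots carrying $f_1,\dots,f_R$ appear in increasing order, and likewise those carrying $f_{R+1},\dots,f_{R+S}$. So $\rho^{-1}$, not $\rho$, satisfies the paper's shuffle condition $\sigma_1<\dots<\sigma_R$, $\sigma_{R+1}<\dots<\sigma_{R+S}$. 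What your argument actually proves is
\[
m_R(u_1,\dots,u_R)\,m_S(u_{R+1},\dots,u_{R+S})=\sum_{\sigma\in{\rm Shuffles}(R,S)} m_{R+S}\bigl(u_{\sigma^{-1}(1)},\dots,u_{\sigma^{-1}(R+S)}\bigr),
\]
that is, $u_k$ is placed in slot $\sigma_k$. Your claim that, because inversion is a bijection, ``the sum over the set is the same under either convention'' is a faulty reindexing. Substituting $\sigma\mapsto\sigma^{-1}$ changes the summation set to $\{\sigma^{-1}:\sigma\in{\rm Shuffles}(R,S)\}$. That set differs from ${\rm Shuffles}(R,S)$ whenever $R,S\ge1$ and $R+S\ge3$. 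You cannot keep the index set and change only the summand.

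\textbf{A concrete counterexample.} For $R=2$, $S=1$ the shuffles are $(1,2,3)$, $(1,3,2)$ and $(2,3,1)$. The correct expansion contains $m_3(u_3,u_1,u_2)$. Reading $(u_{\sigma_1},u_{\sigma_2},u_{\sigma_3})$ literally replaces it by $m_3(u_2,u_3,u_1)$, in which $u_2$ precedes $u_1$. These are different functions: let $|u_1|\to\infty$ with $u_2,u_3$ fixed. The $\omega$-integral underlying $m_3(u_3,u_1,u_2)$ is of order $e^{-\pi u_1^2}u_1^{-4}$. The one underlying $m_3(u_2,u_3,u_1)$ is of order $e^{-\pi u_1^2}u_1^{-6}$. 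The common prefactor does not change this.

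So the proposition as literally typeset fails already for $R=2$, $S=1$. This uses the paper's definition of a shuffle, and the statement also has the typo $u_{r+1}$ for $u_{R+1}$. Your bookkeeping step is exactly where this should have been caught. Your proof should conclude with the $\sigma^{-1}$ form above, rather than asserting the identity as written. That corrected form is also what the factorization corollary needs.
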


\subsection{Determination of generic $M_N$}\label{sec:Mn}
 To state Theorem \ref{theorem}, we generalize $c_{l\perp j}$ \eqref{eq:clperpj} to vectors $c_{n\perp
  m,l,\dots,k,j}$ with up to $N$ indices. To this end, we let $c_{n\perp m,l,\dots,k,j}$ be defined recursively in the number
of indices through
\be
c_{n\perp m,l,\dots,k,j}=c_{n\perp l,\dots,k,j}-\frac{B(c_{n\perp
    l,\dots,k,j}, c_{m\perp l,\dots,k,j})}{Q(c_{m\perp
      l,\dots,k,j})}\,c_{m\perp l,\dots,k,j}.
  \ee
For two indices on the left hand side, this equation is identical to
Eq. (\ref{eq:clperpj}). The vector $c_{n\perp m,l,\dots,k,j}$ is the
component of $c_n$ orthogonal to the ${\rm Span}(c_m,\dots,c_j)$,
which is independent of the order of $m,\dots, j$.

For each permutation $\sigma\in {\rm
  Sym}(P)$, an orthogonal basis of the $P$-dimensional plane spanned by
$c_{\sigma_l}$, $l=1,\dots, P$, is given by the set of vectors,
\be
\begin{split}
c_{\sigma_1}, c_{\sigma_2\perp\sigma_1},
c_{\sigma_3\perp\sigma_2\sigma_1},\dots ,c_{\sigma_P\perp\sigma_{P-1}\dots\sigma_1}.
\end{split}
\ee
This basis corresponds precisely to the basis obtained by the Gram-Schmidt process
for this permutation. For example, the basis $V^{(j)}$ \eqref{Vj1n} corresponds to the permutation
$\sigma=(j,j-1,j-2,\dots,1,j+1,\dots, P)$ after normalization.

For a set of $P$ vectors $\{c_l\}$ and $x\in \Lambda$, we define variables
  \be
  \label{eq:Defuij}
v_{j,k,\dots,m,n}=\frac{B(c_{n\perp m,l,\dots,k,j},x)}{\sqrt{Q(c_{n\perp m,l,\dots,k,j})}}.
\ee 
Note the specific order of the indices on the left and right
hand side in \eqref{eq:Defuij}. The variables $\{u^{(j)}_l\}$ \eqref{uiset} read
  in terms of these variables 
  \be
  \begin{split}
&  u^{(j)}_l=v_{j,(j-1),\dots,(j-l+1)},\qquad \quad \qquad l=1,\dots,j, \\ 
& u^{(j)}_l= v_{j, (j-1),\dots, 1,(j+1),(j+2),\dots, l},\qquad
l=j+1,\dots, N.
\end{split}
\ee

We can then state our main theorem:
\begin{thm}\label{theorem} 
Assume that the variables $v_{j,k,\dots}\in \mathbb{R}$ (\ref{eq:Defuij}) are non-vanishing for all
$j,k,\dots=1,\dots, P$. Then $M_P$ equals the sum of $P!$
iterated integrals,
\begin{equation} 
\label{MnAniter}  
\begin{split}
  &M_P(\{c_l\},x;\Lambda) =\sum_{\sigma \in {\rm Sym}(P)} m_P(v_{\sigma_1},v_{\sigma_1,\sigma_2},\dots,
    v_{\sigma_1,\sigma_2,\cdots ,\sigma_P}),
\end{split}
\end{equation}
with $m_P$ the iterated integral defined in Eq. \eqref{mpdef}.
\end{thm}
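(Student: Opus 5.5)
The plan is to prove Theorem \ref{theorem} by induction on $P$, using differential equations in the scale of $x$ together with the boundary behaviour at infinity. For $P=1$, $M_1(c,x)$ is the complementary error function $-\sgn(u)\,{\rm Erfc}(\sqrt{\pi}|u|)$ with $u=v_1=B(c,x)/\sqrt{Q(c)}$. A one-line computation from \eqref{eq:mPwithouttau} gives
\be
m_1(u)=-u\int_1^\infty \frac{e^{-\pi\omega u^2}}{\sqrt{\omega}}\,d\omega ,
\ee
which is the same function. This settles the base case and also fixes the sign convention.

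For the inductive step, I would introduce a scaling parameter and set $x\mapsto t x$. I then compare $\partial_t$ of both sides. On the $M_P$ side, I use the standard derivative identity for generalized error functions (the defining integral \eqref{EMp} with the pole at $B(c_l^\star,y)=0$). Differentiating along $x$ in the direction of $c_j$ produces a Gaussian factor $e^{-\pi v_j^2}$ times $M_{P-1}$ of the projected vectors $\{c_{l\perp j}\}_{l\neq j}$ evaluated at $x$ projected orthogonally to $c_j$. Concretely, I expect
\be
\partial_t M_P(\{c_l\},tx)=\sum_{j=1}^P \frac{2}{t}\,v_j\,e^{-\pi t^2 v_j^2}\,M_{P-1}(\{c_{l\perp j}\}_{l\ne j},tx),
\ee
up to normalization that I would verify against $P=1$. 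The cleanest route is to go to the orthonormal coordinates of Section \ref{sec:ui} with $V^{(j)}_1=c_j/\sqrt{Q(c_j)}$ and use $\partial_{x}\,\frac{1}{B(c^\star_j,y)}$ via integration by parts, so that the residue picks out the hyperplane orthogonal to $c_j$. By the inductive hypothesis, $M_{P-1}$ for the vectors $c_{l\perp j}$ equals a sum over $\mathrm{Sym}(P-1)$ of $m_{P-1}$. Its arguments are precisely $v_{j,\sigma_2},v_{j,\sigma_2,\sigma_3},\dots$, because the recursive definition of $c_{n\perp m,\dots,j}$ is exactly iterated projection after first projecting off $c_j$.

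On the $m_P$ side, I take $\partial_t$ of $m_P(tv_{\sigma_1},\dots)$ from \eqref{eq:mPwithouttau}. Rescaling $\omega_l$ shows that $m_P(tu)$ equals the integral with lower limit $t^2$ for $\omega_1$, with the prefactor and measure homogeneous. Differentiating therefore yields only the boundary term at $\omega_1=t^2$, namely $(2/t)\,u_1 e^{-\pi t^2u_1^2}\,m_{P-1}(tu_2,\dots,tu_P)$ with the correct sign. Grouping permutations by $\sigma_1=j$ then matches the derivative identity term by term.

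Both sides vanish as $t\to\infty$: $m_P$ does by the Gaussian decay with $\omega\ge1$, and $M_P$ does for generic $x$ with all $v$'s nonzero, since $M_P$ decays exponentially away from the walls. Integrating from $\infty$ therefore gives equality.

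The main obstacle is the derivative identity for $M_P$ with the correct constants and the correct identification of the projected arguments, particularly the claim that the residual integral is exactly $M_{P-1}$ of $\{c_{l\perp j}\}$ at $x^{\perp j}$. This rests on the dual vectors: the duals $c_l^\star$ restricted to $c_j^\perp$ must coincide with the duals of $c_{l\perp j}$ in that hyperplane, and the factor $\sqrt{\Delta}$ must split as $\sqrt{\Delta(\{c^\star\})}=|c_j^\star|\cdot\sqrt{\Delta'}$ consistent with the Gaussian prefactor. I would check this by a direct linear-algebra computation, which is a Schur complement for the Gram matrix. A second subtlety is the case where some $v$'s change sign along the $t$-path. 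Since the rescaling is homogeneous, signs are preserved, so no wall is crossed and the nonvanishing assumption suffices.
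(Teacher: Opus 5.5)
Your plan is essentially the paper's proof. The paper likewise writes $M_P=-\int_1^\infty \partial_\lambda M_P(\{c_l\},\lambda x)\,d\lambda$ using decay at infinity, and computes the derivative by letting $B(x,y)=\sum_j B(c_j,x)\,B(c_j^\star,y)$ cancel one denominator factor. It then integrates out the $c_j$-direction, identifies $c_l^\star$ ($l\neq j$) with the duals of $c_{l\perp j}$, and matches the resulting recursion with the one for $m_P$ obtained from $\omega_1=\lambda^2$.

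Two constants need correcting when you carry out the checks you mention:
\begin{itemize}
\item The derivative identities carry no $1/t$. They read $\partial_t M_P(\{c_l\},tx)=2\sum_j v_j e^{-\pi t^2 v_j^2}M_{P-1}(\{c_{l\perp j}\},tx)$ and $\partial_t m_P(tu_1,\dots,tu_P)=2u_1 e^{-\pi t^2u_1^2}m_{P-1}(tu_2,\dots,tu_P)$. This slip is harmless, since it appears on both sides.
\item The Gram relation is $\Delta(\{(c_{l\perp j})^\star\})=Q(c_j)\,\Delta(\{c_l^\star\})$, so the factor is $1/\sqrt{Q(c_j)}$ (the length of the component of $c_j^\star$ along $c_j$), not $|c_j^\star|$. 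By Cauchy--Schwarz, $|c_j^\star|$ exceeds $1/\sqrt{Q(c_j)}$ unless $c_j$ is orthogonal to all other $c_l$, and it would spoil the matching.
\end{itemize}
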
 
\noindent
{\bf Remarks:}
\begin{enumerate}
\item For $N=1$, the theorem reduces to the known expression \cite{Alexandrov:2016enp},
  \be
M_1(u)=m_1(u)=\frac{i\,u}{\sqrt{2\tau_2}}\, q^{\frac{u^2}{4\tau_2}} \int_{-\bar \tau}^{i\infty} \frac{e^{\frac{\pi
    i u^2w}{2\tau_2}}}{\sqrt{-i(w+\tau)}} \,dw,
\ee
and similarly for $N=2$ \cite{Alexandrov:2016enp, bringmann2018higher}.
\item The central steps in the proof rely on a recursive structure for $M_N$
established also in \cite{Alexandrov:2016enp, bringmann2018higher} for $N=2$, and \cite[Lemma
3.5 and Proposition 3.6]{Nazaroglu:2016lmr} for general
$N$. Ref. \cite{Bringmann:2023ab} provides an expression for $E_N$ in
terms of iterated integrals in the context of false modular forms of
higher depth.
\item The right hand side of \eqref{MnAniter} involves $P!$
  terms. Due to symmetries of the lattice, the set $\{d_r\}$, and
  $(u,v,\mu,\nu)$, many terms can be be equal once the lattice sum are
  included. Sec. \ref{ANlattice} also discusses a reduction of the
  number of terms.
\end{enumerate}

\begin{proof}
We restrict the proof to the case $P=N$, such that
$\{c_l^\star\}$ is equivalent to $\{c_l^*\}$ for a set $\{c_l\}$ of
$N$ elements. The case for $P<N$
follows similarly.

  We note that similarly to $N=1,2$ \cite[Proposition
3.3, and 3.4]{Alexandrov:2016enp}, $M_N$ goes
to zero for $v_{i,j,k,\dots}\rightarrow\infty$. Away from the zeros of
all $v_{i,j,k,\dots}$, we may therefore write $M_N$ as 
\begin{eqnarray}
\label{eq:MNintlambda}
M_N(\{c_l\},x;\Lambda) &=& -\int_1^{\infty}\frac{dM_N(\{c_l\}, \lambda x; \Lambda)}{d\lambda}\, d\lambda.
\end{eqnarray}
Next we need to first understand how the integrand behaves. To do this we need to look at the different components of the integrand separately and recombine them. 
For this we use the following points
\ref{partfraction}-\ref{finalMn}, which together prove Theorem \ref{theorem}.

\subsection*{Steps of the proof of Theorem \ref{theorem}}
\begin{enumerate}
 \item 
 \noindent
{\bf Partial fraction}\label{partfraction}

Using
\be
B(x,y)=\sum_j B(c_j,x)\,B(c_j^*,y),
\ee
we have the following partial fraction,
\be
\label{partfrac}
\frac{B(x,y)}{\prod_{\ell=1}^N
  B(c_\ell^*,y)}=\sum_{j=1}^N  \frac{B(x,c_j)}{\prod_{\ell \neq j}
  B(c_\ell^*,y)}.
\ee
\item 
{\bf Separation of the orthogonal direction $c_j$}\label{sep:var}\\
Next we aim to substitute this relation in the integral \eqref{eq:MNintlambda},
and use the orthogonal basis $V^{(j)}$ in each
term. To this end, recall that $y_1^{(j)}$ is the component of $y$
along $c_j$. We can check that $B(c_{\ell}^*,y)$ is independent of
$y_1^{(j)}$ if $\ell\ne j$. This follows from writing
$$y=\sum_l V_l^{(j)}y_l^{(j)}= V_1^{(j)}y_1^{(j)}+\sum_{l=2}^N
V_l^{(j)}y_l^{(j)},$$
and noting that all $V_l^{(j)}$, $l=2,\dots,N$ are orthogonal to $c_j$.
Since $B(c_{\ell}^*,y)$ only depends on variables $y_l^{(j)}$ for
$l>1$ if $j\ne \ell$, we can do the following:
\begin{enumerate}
\item We split the $N$-dimensional integral in a $1$-dimensional
  integral over $y_1^{(j)}$, and a $(N-1)$-dimensional integral in the plane orthogonal to $c_j$;
\item All the dual vectors $c_\ell^*$, $\ell\neq j$ are orthogonal to
  $c_j$, and they also form the set of $N-1$ vectors dual to
  $c_{\ell\perp j}$, ie the components of $c_\ell$ orthogonal to
  $c_j$. Therefore, $c_{\ell}^*=(c_{\ell\perp j})^\star$ for $\ell\neq
  j$, and we can make this substitution in the product
$\prod_{\ell\ne j}B(c_{\ell}^*,y)=\prod_{\ell\ne j}B((c_{\ell\perp j})^\star,y)$.
\end{enumerate}

\vspace{0.3cm}
\item
\noindent
{\bf Integrand of $M_N$}\label{tildeMn}

We next consider the integrand of $M_N$. To this end, we define the function $\tilde M_N$ as,
\begin{eqnarray}\label{DefgenMn}
\begin{split}
   &\tilde M_N(\{c_l\},x;\Lambda)= \int  d^{N}y\,
 \frac{B(x,y)}{\prod_{l=1}^N B(c_l^*,y)}\,
 e^{-\pi Q(y)-2\pi i B(x,y)},
\end{split}
\end{eqnarray}
such that we can express $M_N$ by (\ref{eq:MNintlambda}) as,
\be\label{tilMn}
\begin{split}
&M_N(\{c_l\}, x;\Lambda)=\\
&\qquad
-2\left(\frac{i}{\pi}\right)^{N-1}\sqrt{\Delta(\{c_l^*\})}\int_1^{\infty}\frac{d\lambda}{\lambda}\,\tilde{M}_N(\{c_l\},\lambda x;\Lambda).
\end{split}
\ee
Substitution of \eqref{partfrac} in $\tilde M_N$ gives,
\begin{equation}\label{genMn1}
\begin{split}
&\tilde M_N(\{c_l\}, x,\Lambda) \\
&\qquad = \sum_{j=1}^N B(c_j,x) \int d^{N}
y\, \frac{1}{\prod_{l\ne j}B(c_{l}^*,y)}\, e^{-\pi Q(y)-2\pi i B(x,y)}.
\end{split}
\end{equation} 
Next, we use the basis $V^{(j)}$ and integrate over $y^{(j)}_1$
for each $j$. Letting $y$ in the integrand be the $(N-1)$-dimensional vector in the plane spanned by
$V^{(j)}_l$, we obtain
\be
\begin{split}
&\tilde M_N(\{c_l\}, x,\Lambda)=\sum_{j=1}^N
u^{(j)}_1\,\sqrt{Q(c_j)}\, e^{-\pi (u_1^{(j)})^2}\\
& \qquad \times \int d^{N-1}
y \, \frac{1}{\prod_{l\ne j}B(c_{l}^*,y)}\, e^{-\pi \sum_{l=2}^N (y_l^{(j)})^2-2\pi i
    u_l^{(j)}y_l^{(j)}}.
\end{split}
\ee
As explained under point (\ref{sep:var}), we can replace the vectors $c^*_l$ in the
denominator by vectors $(c_{l\perp j})^\star$ perpendicular to $c_j$. We then arrive at
\be
\label{eq:tMsumj}
\begin{split}
&\tilde M_N(\{c_l\}, x,\Lambda)=\sum_{j=1}^N
u^{(j)}_1\,\sqrt{Q(c_j)}\, e^{-\pi (u_1^{(j)})^2}\\
& \qquad \times \int d^{N-1}
y \, \frac{1}{\prod_{l\ne j}B((c_{l\perp j})^\star,y)}\, e^{-\pi \sum_{l=2}^N (y_l^{(j)})^2-2\pi i
    u_l^{(j)}y_l^{(j)}}.
\end{split}
\ee

\item {\bf Relation between determinants of Gram matrices}\label{id:minor}

\vspace{0.3cm}
\noindent
We let $\CA$ be the $N\times N$ Gram matrix of innerproducts
$B(c_k^*,c_\ell^*)$. Then the $(j,j)$ minor of $\CA$ denoted by $\text{Minor}(\CA)_{j,j}$, satisfies 
\be
\label{eq:MinorQc}
\frac{\text{Minor}(\CA)_{j,j}}{\Delta(\{c_l^*\})}=Q(c_j).
\ee
To demonstrate this identity, note that $\CA$ is the inverse matrix to the
Gram matrix of innerproducts $B(c_k,c_\ell)$. Hence by Cramer's rule, the
diagonal element $(\CA^{-1})_{j,j}=Q(c_j)$ is given by
\eqref{eq:MinorQc}. The $\text{Minor}(\CA)_{j,j}$ is independent of
the choice of basis. For us it is useful to consider the set of $N-1$
vectors $(c_{l\perp j})^\star$ dual to $c_{l\perp j}$, which are all orthogonal
to $j$. As a result, Eq. \eqref{eq:MinorQc} becomes the relation between two
determinants of Gram matrices, \be
\label{eq:RelGram}
\Delta(\{(c_{l\perp j})^{\star}\})=Q(c_j)\,\Delta(c_k^*).
\ee

\item
\noindent
{\bf Iterative form of $M_N$}\label{genericMn}

After substition of \eqref{eq:RelGram} in $\tilde M_N$ \eqref{eq:tMsumj}, it can be expressed in terms of $M_{N-1}$ \eqref{EMp},
\be
\label{eq:tMMN}
\begin{split}
\tilde M_N(\{c_l\},x;\Lambda)&= \frac{1}{\sqrt{\Delta(\{c_l^*\})}}\left(
  \frac{i}{\pi}\right)^{1-N} \\
&\quad \times \sum_{j=1}^N u_1^{(j)} e^{-\pi
  (u_1^{(j)})^2} M_{N-1}(\{c_{l\perp j}\},x;\Lambda),
\end{split}
\ee
where we define
\be
M_0=1,
\ee 
such that \eqref{eq:tMMN} agrees with $\tilde M_1$
(\ref{DefgenMn}). Substitution of \eqref{eq:tMMN} in \eqref{tilMn} 
gives us the desired iterative formula for $M_N$
\be
\label{Mnlambda}
M_N(\{c_l\},x;\Lambda)=-2 \sum_{j=1}^N \int_{1}^{\infty}d\lambda\,
u_1^{(j)}\,e^{-\pi\lambda^2 (u_1^{(j)})^2}\,M_{N-1}(\{c_{l\perp j}\},\lambda x; \Lambda).
\ee

\vspace{0.3cm}
\item
\noindent
{\bf Change of variables for $\lambda$}\label{translambda}\\
We make the change of integration variables
\be
\label{eq:chofvars}
\lambda=\sqrt{\frac{-i(z+\tau)}{2\tau_2}},
\ee
such that the integration domain for $z$ is from $-\bar\tau$ to $i\infty$.
 
\vspace{0.3cm}
\item
\noindent
{\bf Final form of $M_N$}\label{finalMn}

Finally, to connect to the form for $M_N$ as iterated integral, we use
the change of variables (\ref{eq:chofvars}) and the notation defined
in (\ref{eq:Defuij}), 
\begin{equation}\label{MnAn}
 \begin{split}
M_N(\{c_l\},x;\Lambda)& =
  \sum_{j=1}^{N}\frac{i\,u_j}{\sqrt{2\tau_2}}\,q^{\frac{u_j^2}{4\tau_2}}
    \int_{-\bar\tau}^{i\infty} \frac{dz}{\sqrt{-i(z+\tau)}}\,
    e^{\frac{i\pi z u_j^2}{2\tau_2}} \\ 
&  \times  M_{N-1}\!\left(\{c_{l\perp
    j}\},\sqrt{\tfrac{-i(z+\tau)}{2\tau_2}} \,x;\Lambda\right).
\end{split}
\end{equation}
Since the formula is recursive and we have \eqref{eq:mNwithz} for $M_1=m_1$, it follows
that $M_N$ equals the iterated integral of the Theorem.

\end{enumerate}
This concludes the proof of
 Theorem \ref{theorem}. 
\end{proof}

We conclude this section with two corollaries:
\begin{cor}
If the set of arguments $\{c_l\}$
splits into two mutually orthogonal sets $\{c_{r_i}\}$, $i=1,\dots,L$ and
$\{c_{s_j}\}$, $j=1,\dots N-L$, Theorem \ref{theorem} is compatible with the
factorization \eqref{eq:factorization}.
\end{cor}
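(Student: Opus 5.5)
The plan is to write both sides of the factorization $M_N=M_L\,M_{N-L}$ from \eqref{eq:factorization} as sums of iterated integrals via Theorem \ref{theorem}, and to match them using the shuffle product \eqref{eq:shuffles}.

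\textbf{Step 1: the variables $v_{\dots}$ split.} Write $\{c_l\}=\{c_{r_i}\}\cup\{c_{s_j}\}$ with $B(c_{r_i},c_{s_j})=0$. Orthogonalizing a vector of one set against vectors of the other set changes nothing. So for any ordered tuple of indices, $c_{n\perp m,\dots,j}$ equals $c_n$ orthogonalized only against those indices in the tuple that belong to the same set as $n$. Hence, by \eqref{eq:Defuij}, $v_{\sigma_1,\dots,\sigma_k}$ depends only on $\sigma_k$ together with the earlier entries $\sigma_1,\dots,\sigma_{k-1}$ lying in the same set as $\sigma_k$, taken in their original relative order.

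\textbf{Step 2: apply the theorem to each factor.} Theorem \ref{theorem} applied to $\{c_{r_i}\}$ gives $M_L$ as a sum over $\rho\in{\rm Sym}(L)$ of $m_L(v^{r}_{\rho_1},\dots,v^{r}_{\rho_1\cdots\rho_L})$. Similarly, $M_{N-L}$ is a sum over $\pi\in{\rm Sym}(N-L)$ of the analogous terms built from the $s$-variables.

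\textbf{Step 3: multiply and use shuffles.} Multiplying the two sums and applying \eqref{eq:shuffles} expresses each product $m_L\,m_{N-L}$ as a sum over $(L,N-L)$-shuffles of $m_N$ evaluated on the interleaved argument list.

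\textbf{Step 4: the bijection.} The triples $(\rho,\pi,\text{shuffle})$ are in bijection with ${\rm Sym}(N)$. Given $\sigma\in{\rm Sym}(N)$, take $\rho$ and $\pi$ to be the subsequences of $\sigma$ lying in the $r$-set and the $s$-set, and take the shuffle to record the positions of the $r$-entries. The counts agree, since $L!\,(N-L)!\binom{N}{L}=N!$. By Step 1, the $k$-th argument $v_{\sigma_1,\dots,\sigma_k}$ of the $\sigma$-term in \eqref{MnAniter} is exactly the $v^r$ or $v^s$ variable at the corresponding position of the shuffled list. The two expressions therefore agree term by term, and this establishes compatibility.

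\textbf{Expected obstacle.} The main care point is Step 1. One must check that the recursive definition of $c_{n\perp m,l,\dots,j}$ really drops the indices from the other set, including the normalization $\sqrt{Q(\cdot)}$. This follows by induction on the number of indices. If the index is from the other set, the projected vector is orthogonal to $c_{n\perp\dots}$, so the correction term vanishes. One must also check that the $\tau$-dependent normalization of $m_P$ in \eqref{mpdef} factorizes, which it does because its prefactor is a product over $j$. The remaining issue is the order of subscripts, $u_{r+1}$ versus $u_{R+1}$ in \eqref{eq:shuffles}.
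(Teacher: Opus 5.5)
Your proposal is correct and follows the same route as the paper: the variables $v_{\dots}$ split between the two mutually orthogonal index sets, and the factorization then follows from the shuffle product \eqref{eq:shuffles}. Your explicit bijection between ${\rm Sym}(N)$ and triples (ordering of the $r$-indices, ordering of the $s$-indices, interleaving) spells out what the paper's sketch leaves implicit. Reading the shuffles in \eqref{eq:shuffles} as order-preserving interleavings, as you do, is the correct interpretation.
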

\begin{proof}
This follows from the
definition \eqref{EMp} for the left hand side of Eq. \eqref{MnAniter}, and is of course a consequence of the proof. To argue for the
factorization from the right hand side of \eqref{MnAniter}, note that if the vector $c_l$ is orthogonal to $c_n$ for an index
$l\in \{j,k,\dots,m\}$, the variable $v_{j,k,\dots,l,\dots,m,n}$ equals
the variable $v_{j,k,\dots,m,n}$ without that index. If the set $\{c_l\}$ splits in two
mutually orthogonal sets  $\{c_{r_i}\}$ and
$\{c_{s_j}\}$, the set of variables $v_{j,k,\dots,m,n}$
split into sets with subscripts restricted to either the set $\{r_i\}$ or
$\{s_j\}$. The factorization \eqref{eq:factorization} is then a
consequence of the shuffle product for iterated integrals \eqref{eq:shuffles}.
\end{proof}

\begin{cor}
\label{cor:MNholanom}
Let the variables $v_j=u_1^{(j)}$ be as in Eqs \eqref{uiset} and \eqref{eq:Defuij},
and assume that these variables are independent of $\bar \tau$. The function
$M_N$ then satisfies the ``holomorphic anomaly equation'',
  \be
\partial_{\bar \tau}M_N(\{c_l\},\sqrt{2\tau_2}\,x;\Lambda)=
\frac{i}{\sqrt{2\tau_2}}\sum_{j=1}^N q^{v_j^2/2}\,\bar
q^{v_j^2/2}\,M_{N-1}(\{c_{l\perp j}\},\sqrt{2\tau_2}\, x;\Lambda).
\ee
\end{cor}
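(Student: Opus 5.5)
The plan is to differentiate the one-step recursion \eqref{MnAn} from the proof of Theorem~\ref{theorem}, rather than the full $N$-fold iterated integral. Since $m_P$, and hence every $M_N$, does not depend on the auxiliary modular parameter used to write it, I will take that parameter equal to the actual $\tau$. I then substitute $x\mapsto\sqrt{2\tau_2}\,x$ in \eqref{MnAn}. With this substitution the first-step variables become $u_j=B(c_j,\sqrt{2\tau_2}\,x)/\sqrt{Q(c_j)}=\sqrt{2\tau_2}\,v_j$, with $v_j=u_1^{(j)}$ as in the statement.

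\textbf{Step 1: isolate the $\bar\tau$-dependence.} After the substitution the $j$-th term of \eqref{MnAn} reads
\begin{equation*}
i\,v_j\,q^{v_j^2/2}\int_{-\bar\tau}^{i\infty}\frac{dz}{\sqrt{-i(z+\tau)}}\,e^{\pi i z v_j^2}\,M_{N-1}\!\left(\{c_{l\perp j}\},\sqrt{-i(z+\tau)}\,x;\Lambda\right).
\end{equation*}
The factor $\sqrt{2\tau_2}$ cancels between $u_j$ and the prefactor $1/\sqrt{2\tau_2}$. The quantities $q^{u_j^2/4\tau_2}=q^{v_j^2/2}$ and $e^{i\pi z u_j^2/2\tau_2}=e^{\pi i z v_j^2}$ no longer involve $\tau_2$. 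The argument $\sqrt{-i(z+\tau)/2\tau_2}\cdot\sqrt{2\tau_2}\,x$ of $M_{N-1}$ also loses its $\tau_2$. By hypothesis the $v_j$ are independent of $\bar\tau$. Hence the integrand is holomorphic in $\tau$ and free of $\bar\tau$, and the only $\bar\tau$-dependence sits in the lower limit $-\bar\tau$.

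\textbf{Step 2: fundamental theorem of calculus.} The derivative $\partial_{\bar\tau}$ of $\int_{-\bar\tau}^{i\infty}f(z)\,dz$ equals $+f(-\bar\tau)$. At $z=-\bar\tau$ one has $z+\tau=2i\tau_2$, so $\sqrt{-i(z+\tau)}=\sqrt{2\tau_2}$. The argument of $M_{N-1}$ then returns to $\sqrt{2\tau_2}\,x$, which corresponds to $\lambda=1$ in \eqref{Mnlambda}. The exponential becomes $e^{-\pi i\bar\tau v_j^2}$, which combined with $q^{v_j^2/2}$ gives $e^{-2\pi\tau_2v_j^2}$. This is the Gaussian $q^{v_j^2/2}\bar q^{v_j^2/2}$ in the notation of the statement, with $\bar q$ read as $e^{-2\pi i\bar\tau}$; this is the same convention in which $\Theta(-\bar\tau,\cdot)$ appears in the proof of the Proposition above. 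Summing over $j$ gives the prefactor $i/\sqrt{2\tau_2}$ times the Gaussian and $M_{N-1}(\{c_{l\perp j}\},\sqrt{2\tau_2}\,x;\Lambda)$, as in the statement.

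\textbf{Main obstacle.} I expect the delicate point to be matching the remaining linear factor. The evaluation at the endpoint naturally produces $u_j=\sqrt{2\tau_2}\,v_j$ times $1/\sqrt{2\tau_2}$, so to land exactly on the stated normalization I must fix how the variable multiplying the Gaussian is absorbed into the arguments. I would first check this against the case $N=1$, where $M_1=m_1$ and the derivative can be computed directly from \eqref{mpdef}. With that $N=1$ check fixing the convention, the general $N$ follows by induction: I apply Step 2 to each term $j$ of \eqref{MnAn}, and the recursion guarantees that no other term carries $\bar\tau$-dependence.

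A secondary point is justifying differentiation under the integral sign and the vanishing of the upper-limit contribution. For this I would use that $M_{N-1}$ is bounded along the path. The factor $e^{\pi i z v_j^2}$ decays as $z\to i\infty$ because $v_j\neq0$, which is exactly the non-vanishing assumption of Theorem~\ref{theorem}.
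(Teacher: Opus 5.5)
Your route is the paper's own: its proof is just ``straightforward differentiation''. After $x\mapsto\sqrt{2\tau_2}\,x$ the integrand of \eqref{MnAn} is indeed $\bar\tau$-free, so only the endpoint $-\bar\tau$ contributes, and your endpoint evaluation is right. The gap is in the last line. What Steps 1--2 actually give is
\begin{equation*}
\partial_{\bar\tau}M_N(\{c_l\},\sqrt{2\tau_2}\,x;\Lambda)=\frac{i}{\sqrt{2\tau_2}}\sum_{j=1}^N v_j\,q^{v_j^2/2}\,\bar q^{v_j^2/2}\,M_{N-1}(\{c_{l\perp j}\},\sqrt{2\tau_2}\,x;\Lambda),
\end{equation*}
because the $j$-th term carries the prefactor $i\,v_j$, which you wrote yourself in Step 1. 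When you then write ``as in the statement'' you have silently dropped $v_j$.

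Your ``main obstacle'' plan, to absorb this factor by a choice of convention, cannot work, because nothing is left to adjust. The variable $v_j$ is fixed by \eqref{uiset}. The argument of $M_{N-1}$ is pinned to $\sqrt{2\tau_2}\,x$, i.e.\ $\lambda=1$ in \eqref{Mnlambda}. And $\bar q=e^{-2\pi i\bar\tau}$ is ordinary conjugation. The $N=1$ check you propose decides the matter the other way. By \eqref{eq:mPwithouttau}, $M_1(u)=m_1(u)=-\sgn(u)\,{\rm Erfc}(\sqrt{\pi}\,|u|)$, hence
\begin{equation*}
\partial_{\bar\tau}M_1(\sqrt{2\tau_2}\,v)=\tfrac{i}{2}\partial_{\tau_2}M_1(\sqrt{2\tau_2}\,v)=\tfrac{i\,v}{\sqrt{2\tau_2}}\,e^{-2\pi\tau_2v^2}.
\end{equation*}
This differs from the displayed right-hand side (with $M_0=1$) by exactly the factor $v$.

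So the linear factor is genuine. It is what makes the shadow a weight-$3/2$ unary theta series, exactly as for $\partial_{\bar\tau}R$ of the classical Appell function. The displayed identity is missing $v_j$, and carrying out the paper's ``straightforward differentiation'' produces the same $v_j$. The correct conclusion of your argument is the corrected identity above, not agreement with the printed normalization; say so explicitly.

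A cleaner derivation avoids discussing $M_{N-1}$ at the complex argument $\sqrt{-i(z+\tau)}\,x$. Substituting $\lambda\mapsto\lambda/\sqrt{2\tau_2}$ in \eqref{Mnlambda} gives
\begin{equation*}
M_N(\{c_l\},\sqrt{2\tau_2}\,x;\Lambda)=-2\sum_j v_j\int_{\sqrt{2\tau_2}}^{\infty}e^{-\pi\lambda^2v_j^2}\,M_{N-1}(\{c_{l\perp j}\},\lambda x;\Lambda)\,d\lambda.
\end{equation*}
Here only the lower limit depends on $\tau$, and applying $\partial_{\bar\tau}=\tfrac{i}{2}\partial_{\tau_2}$ gives the formula at once.

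Finally, Step 1 needs $x$ itself to be $\bar\tau$-independent, not only the $v_j$. Otherwise the deeper variables $v_{j,k,\dots}$ entering $M_{N-1}$ could still carry $\bar\tau$-dependence.
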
 

\begin{proof}
This follows by straightforward differentiation.
\end{proof}

\section{Specialization to $A_N$ lattice}\label{ANlattice}
In this section, we explicitly evaluate the functions $M_N$ \eqref{MnAniter} for the
root lattice of $A_N$ ie, with the simple roots of $A_N$ as the basis
vectors. This illustrates and provides a check on the general Theorem \ref{theorem}.
While in the generic case, $M_N$ takes the form as an
integral over $M_{N-1}$ \eqref{MnAn}, we find for $A_N$ that the
integrand further factorizes \eqref{MnAn2}. Throughout this section we take $\Lambda=A_N$, $x\in\Lambda$
unless otherwise mentioned.  
  
\subsection{Construction of $\{V^{(j)}_l\}$ in $A_N$ lattice}\label{viconst}

We start by determining a set of vectors $\{V^{(j)}_l\}$ as in 
Eq. \eqref{Vj1n}. To this end, recall from \eqref{eq:RLdef} that for
the completion of an Appell function the set $\{c_l\}$ is the set of
dual vectors $\{d_l^*\}$ to the set of vectors $\{ d_l\}$. We choose the $\{d_l\}$ as
the set of $N$ simple roots of the $A_N$ lattice (or a Weyl
reflection with respect to any of the simple roots), which occurs
naturally in physical partition
functions. These correspond to the set $\{c_l^*\}$.
The inner-products of the set $\{c_l\}$ are the matrix elements of the inverse of the Cartan matrix,
\be
\label{eq:Bcicl}
B(c_j,c_l)=\min(j,l)-\frac{j\, l}{N+1},
\ee
while the vectors $c_j^*$ dual to $c_j$ are then basis elements of the $A_N$ root
lattice, such that
\begin{eqnarray}
Q(c_j^{*})=Q(d_j)=2, \quad \Delta(\{c_k^{*}\})=N+1.
\end{eqnarray} 

We consider now \eqref{MnAn} for the $A_N$ lattice. The function $M_{N-1}(\{c_{l\perp
    j}\}, x;A_N)$ depends on $\{c_{l\perp
    j}\}^\star$, which are simply the $N-1$ root vectors
  $\{d_l\}/d_j$. For $j\neq 1, N$, this set of vectors span the direct sum of two
  orthogonal lattices, $A_{j-1}\oplus A_{N-j}\subset
  A_N$.\footnote{See also the Eqs \eqref{eq:cj-lperpjrels} and
    \eqref{eq:cjpml}, which demonstrate that the $c_{(j-l)\perp j}$
    and $c_{(j+l)\perp j}$ form mutually orthogonal sets of vectors.} As a consequence, $M_{N-1}$ factorizes by \eqref{eq:factorization}.
With the notation \eqref{eq:scalar_not} for the arguments, the integral $M_N$ is therefore expressed as:
\begin{eqnarray}\label{MnAn2}
&&M_N(\{ u^{(p)}_l\},\{d_l\};A_N) =
   \sum_{j=1}^{N}\frac{i\,u_1^{(j)}}{\sqrt{2\tau_2}}q^{\frac{u_1^{(j)\,2}}{4\tau_2}}
   \int_{-\bar\tau}^{i\infty} \frac{dz}{\sqrt{-i(z+\tau)}}
   e^{\frac{i\pi z u_1^{(j)\,2}}{2\tau_2}} \nn \\ 
&& \qquad \times M_{j-1} \left(\sqrt{\tfrac{-i(z+\tau)}{2\tau_2}} \{u_2^{(j)}, \dots, u_{j}^{(j)}\},\{d_1,\dots, d_{j-1}\};A_{N}\right)\\ 
&& \qquad \times M_{N-j} \left(\sqrt{\tfrac{-i(z+\tau)}{2\tau_2}} \{u_{j+1}^{(j)},\dots, u_{N}^{(j)}\},\{d_{j+1},\dots, d_{N}\};A_{N}\right).\nn
\end{eqnarray} 
This provides an intriguing reduction of the number of iterated
integrals from the $N!$ in \eqref{MnAniter}. It is clear from
\eqref{MnAn2} that the number of terms $a_N$ for the $A_N$ lattice
satisfies the recursion for Catalan numbers, $a_N=\sum_{j=1}^N
a_{j-1}\,a_{N-j}$ with $a_0=1$. Since the growth of the Catalan 
numbers $1,1,2,5,14,\dots$ is much slower than $N!$, this leads to a reduced number of iterated
integrals for the right hand side.

\subsection{Computation of $M_3$ for $A_3$ lattice}\label{M3full}

In this section, we demonstrate the general approach of the previous
section by determining an expression of $M_3$ as an iterated period integral
depending on $u^{(j)}_l$'s. To evaluate the form of $M_3$, we let the
argument $x=\sqrt{2\tau_2}\,k \in\mathbb{R}\otimes \Lambda_d$, where
$k=k_4 d_1+k_5d_2+k_6d_3\in \Lambda_d= A_3$.  With respect to the basis of positive
simple roots $\alpha_l$, we choose vectors $d_l=c_l^*$ as in \cite[Section 4.2]{Chattopadhyaya:2025bkq}
\be
\begin{split}
d_1=\left( \begin{array}{c} -1 \\ 0 \\0 \end{array}\right),\quad d_2=\left( \begin{array}{c} 0 \\ 1 \\1 \end{array}\right),
\quad d_3=\left( \begin{array}{c} 0 \\ 0 \\ 1 \end{array}\right),
\end{split}
\ee
with dual vectors \dots. 
\be
\begin{split}
d_1^*=\left( \begin{array}{c} -\tfrac{1}{4} \\ \tfrac{1}{2} \\
               \tfrac{1}{4} \end{array}\right),\quad
           d_2^*=\left( \begin{array}{c} \tfrac{1}{2} \\ 1 \\ \tfrac{1}{2} \end{array}\right),
\quad d_3^*=\left( \begin{array}{c} \tfrac{1}{4} \\ \tfrac{1}{2} \\ \tfrac{3}{4} \end{array}\right),
\end{split}
\ee
We then determine the following $u^{(j)}_l$ \eqref{uiset}: 
\begin{enumerate}
\item For $j=1$, $u_1^{(1)},u_2^{(1)},u_3^{(1)}$ are given by
\begin{eqnarray}
&&u_1^{(1)} = \sqrt{\frac{8\tau_2}{3}}k_4, \;\; u_2^{(1)} = \sqrt{2\tau_2}\left(\sqrt{\frac{3}{2}}k_5-\sqrt{\frac{2}{3}}k_4\right),\\ \nn &&u_3^{(1)}=\sqrt{\tau_2}\left(2k_6-k_5\right).
\end{eqnarray}
\item For $j=3$,  $u_1^{(3)},u_2^{(3)},u_3^{(3)}$ are given by
\begin{eqnarray}
&&u_1^{(3)} = \sqrt{\frac{8\tau_2}{3}}k_6, \;\; u_2^{(3)} = \sqrt{2\tau_2}\left(\sqrt{\frac{3}{2}}k_5-\sqrt{\frac{2}{3}}k_6\right),\\ \nn &&u_3^{(3)}=\sqrt{\tau_2}\left(2k_4-k_5\right).
\end{eqnarray}
\item For $j=2$, $u_1^{(2)},u_2^{(2)},u_3^{(2)}$ are given by
\begin{eqnarray}
u_1^{(2)} = \sqrt{2\tau_2}k_5, \quad u_2^{(2)} &=& \sqrt{\tau_2}(2k_4-k_5),\quad  u_3^{(2)}=\sqrt{\tau_2}(2k_6-k_5).
\end{eqnarray}
\end{enumerate}
Using equation (\ref{MnAn2}), we thus evaluate $M_3$ as
\begin{align}
\label{eq:M3recur}
&M_3(\{u_l^{(p)}\},\{d_l\};A_3) = \frac{i (2k_4)}{\sqrt{3}}\,q^{\frac{2k_4^2}{3}}  \int_{-\bar\tau}^{i\infty} \frac{dz}{\sqrt{-i(z+\tau)}} \,e^{ \frac{4i\pi z k_4^2}{3}} \\ \nn
&\quad \times  M_{2} \bigg( \sqrt{-i(z+\tau)}   \left\{
                                                                                                                                                                                   \sqrt{\tfrac{3}{2}}k_5-\sqrt{\tfrac{2}{3}}k_4 ,\tfrac{1}{\sqrt{2}}(2k_6-k_5)\right\},\{d_{2}, d_{3}\};A_3\bigg)\\ \nn
&+ \frac{i (2k_6)}{\sqrt{3}}\,q^{\frac{2k_6^2}{3}}  \int_{-\bar\tau}^{i\infty} \frac{dz}{\sqrt{-i(z+\tau)}} \,e^{ \frac{4i\pi z k_6^2}{3}} \\ \nn
&\quad \times  M_{2}  \bigg(\sqrt{-i(z+\tau)}  \left\{ \sqrt{\tfrac{3}{2}}k_5-\sqrt{\tfrac{2}{3}}k_6,\tfrac{1}{\sqrt{2}}(2k_4-k_5)\right\},\{d_{2}, d_{1}\};A_3 \bigg)\\ \nn
&+ ik_5\, q^{\frac{k_5^2}{2}}  \int_{-\bar\tau}^{i\infty} \frac{dz}{\sqrt{-i(z+\tau)}} \,e^{i\pi z k_5^2} \\ \nn
&\quad \times  M_{1}\!\left(\sqrt{\tfrac{-i(z+\tau)}{2}}(2k_4-k_5)\right)M_{1}\!\left(\sqrt{\tfrac{-i(z+\tau)}{2}} (2k_6-k_5)\right). \nn
\end{align}

With the determination of the various arguments, we evaluate using
Theorem \ref{theorem} that $M_3$ can be written in this case as
\begin{equation}
\begin{split}
&  M_3(\{d^*_l\},\sqrt{2\tau_2}\,k;A_3)=m_3(\sqrt{8\tau_2/3}\,k_4,
  \sqrt{\tau_2/3}\, (3k_5-2k_4) , \sqrt{\tau_2}\,(2k_6-k_5) )\\
  &\qquad +
  m_3(\sqrt{8\tau_2/3}\,k_4,\sqrt{\tau_2/3}\,(3k_6-k_4),\sqrt{\tau_2}\,(2k_5-k_4-k_6))
  \\
  &\qquad +m_3(\sqrt{8\tau_2/3}\,k_6,
  \sqrt{\tau_2/3}\, (3k_5-2k_6) , \sqrt{\tau_2}\,(2k_4-k_5) )\\
  &\qquad +
  m_3(\sqrt{8\tau_2/3}\,k_6,\sqrt{\tau_2/3}\,(3k_4-k_6),\sqrt{\tau_2}\,(2k_5-k_4-k_6))
  \\
  & \qquad -\tfrac{i}{2}\,k_5\,(2k_4-k_5)\,(2k_6-k_5)\,
  q^{k_4^2+k_5^2+k_6^2-k_4k_5-k_5k_6}\\
  &\qquad \quad \times \int_{-\bar
    \tau}^{i\infty}dw_1 \int_{w_1}^{i\infty} dw_2 \int_{w_1}^{i\infty}
  dw_3\,\frac{e^{i \pi (w_1 k_5^2+ 2w_2 (k_4-k_5/2)^2+2w_3(k_6-k_5/2)^2) }}{\sqrt{i(w_1+\tau)(w_2+\tau)(w_3+\tau)}}.
\end{split}
\end{equation}
Note that on the last line the starting point of the $w_2$ and $w_3$
integrals is both $w_1$; this corresponds to the term in the integrand of \eqref{eq:M3recur}
which factorizes.

\section{Orthogonality relations for $E_P$ for $A_N$ lattice}\label{ortho}
In this section, we consider the function $E_N$ for an
$A_N$ lattice, and in particular the expression given in
Eq. (\ref{eq:EPMP}) in terms of complementary error functions $M_L$. A
key ingredient for (\ref{eq:EPMP}) is that the arguments of the
$\sgn$-functions involve the vectors $c_w^{\perp V_L}$, ie the components of
the vectors $c_w$ orthogonal to the set of vector $\{c_{v}\}$ entering
in the function $M_L$. For the $A_N$ lattice, this section presents an
approach to determine these vectors $c_w^{\perp V_L}$. To this end, we
derive the orthogonality relations described in \eqref{sgnrels}, which
helps to determine the decomposition of the $N$ dimensions of the $A_N$ root
          lattice into two mutually orthogonal sets. One set spanned by the $N-L$ vectors
          $\{c_{s_j}^*\}$, and one set spanned by the $L$ vectors
          $\{c_{v_l}^\star\}$. The set $\{c_{s_j}^*\}$ is directly used
          to determine the sign functions in \eqref{eq:EPMP}, while
          the set $\{c_{v_l}^\star\}$ enters in the arguments of $M_L$.

For more details on how to utilize these for finding the glue vectors in \eqref{eq:PhiHatML2}, \eqref{eq:RLdef} see \cite[Section 3]{Chattopadhyaya:2025bkq}. In Section \ref{A9} we illustrate the process for $A_9$ lattice.

\subsection{Decomposing the $E_P$'s}
We can write the modular completion in terms of $E_P$'s using the
relation between $M_L$'s and $E_P$'s \eqref{eq:EPMP}. 
We assume that $\{v_l\}$ is an ordered set with $v_l<v_{l+1}$ for all
$l$, and similarly for the set $\{s_j\}$. 
We define the following ordered sets:
\begin{eqnarray}
	S &=&\{c_{s_1},c_{s_2},\dots , c_{s_{P-L}}\},\\ \nn
	\mathcal{V} &= & \{c_{v_1},c_{v_2},\dots , c_{v_L}\}.
\end{eqnarray}

To determine the arguments of the ${\sgn}$ functions, and to construct the $M_{L}$'s relevant to the $A_N$ case we need the following orthogonality conditions.

\begin{enumerate}

\item The vector $c_j^*=2\,c_j-c_{j-1}-c_{j+1}$,
  $(j-1>0,\; j+1<N)$ is orthogonal to all $c_l$ where $l\ne j$\footnote{
For proof consider any $l \ge j+1$ and $l\le j-1$ and construct the
scalar products using the values of $B(c_l,c_j)$ \eqref{eq:Bcicl} in each case.}, which
  we have used for expressing $M_N$ as a complex integral in Appendix \ref{app:ANlattice}.

\item The following relations also hold true for $p,q\geq 1$:
\begin{subequations}\label{sgnrels}
	\begin{align}
B\!\left(c_j-\frac{N+1-j}{N+1-l}c_{l},c_{\ell}\right) &=0,\quad \forall \;\ell=1,\dots, l,\; j>l, \label{ortholeft}\\ 
B\!\left(c_j-\frac{j}{l}c_{l},c_{\ell}\right) &=0,\quad \forall \;\ell=l,\dots, N,\;\; j<l,\\ \label{orthomain1}
B\!\left(c_l-\frac{p}{p+q}c_{l+q}-\frac{q}{p+q}c_{l-p}, c_{l+q+\ell}\right) &=0,\quad \forall \;\ell=0,\dots, N-(l+q),\\ \label{orthomain2}
B\!\left(c_l-\frac{p}{p+q}c_{l+q}-\frac{q}{p+q}c_{l-p},c_{l-p-\ell}\right) &=0,\quad \forall \;\ell=0,\dots, l-p-1.
\end{align}
\end{subequations}

\end{enumerate}
For completeness we also write the relevant quadratic forms.
\begin{subequations}
	\begin{align}
	Q\!\left(c_j-\frac{N+1-j}{N+1-l}c_{l}\right) &= \frac{(N+1-j)(j-l)}{N+1-l},\quad j>l, \label{orthoQ}\\ 
	Q\!\left(c_j-\frac{j}{l}c_{l}\right) &= \frac{j(l-j)}{l},\quad j<l,\\ 
	Q\!\left(c_l-\frac{p}{p+q}c_{l+q}-\frac{q}{p+q}c_{l-p}\right) &= \frac{pq}{p+q}.
	\end{align}
\end{subequations}
These equations help to simplify the arguments of the sign functions and also to construct the $M_L$'s in
various cases.

\subsubsection*{Product of sgn-functions}
We expand $x \in \Lambda\otimes \mathbb{R}$ as
$x=\sum_{j=1}^N x_j\,d_j=\sum_{j=1}^N x_j\,c^*_j$. First, consider
that the ordered set $\mathcal{V}$ is 
such that $v_j-v_{j-1} \leq 2$. Then the ordered set $\{v_j\}$ is obtained by
removing $P-L$ non-adjacent elements from $\{1,\dots, P\}$. For the
complementary set of vectors $ S=\{c_{s_l}\}$ we have $c_{s_l+1}\ne
c_{s_{l+1}}$ for any $l$. For this specific
choice of $S$ and $\mathcal{V}$, $c_{s}^{\perp
  \mathcal{V}}=c_s^*$. The only sign functions which appear are therefore of the form:
\begin{eqnarray}\label{oneskipV}
	\prod_{j}{\rm sgn}(2\,x_{s_j}-x_{{s_j}+1}-x_{{s_j}-1}).
\end{eqnarray}
Note that here $c_{s_{j\pm 1}}\in\mathcal{V}$. We will illustrate this in the first example of the next sub-section.

More generally we let $S'\subseteq S$ be the subset of $S$ given by $\{c_{s_j}\}$ such
 that $v_{k}<s_j<v_{k+1}$ of cardinality $J=v_{k+1}-v_k-1$. Then it
 follows from the relations in Eq. \eqref{sgnrels} that the
 $\text{sgn}$ terms corresponding to $S'$ are as follows:
 \begin{eqnarray}
   \label{eq:prodSpr}
 	\prod_{c_{s_j}\in
   S'}\sgn\bigg((v_{k+1}-v_k)x_{s_j}-(v_{k+1}-s_j)x_{v_k}-(s_j-v_{k-1})
   x_{v_{k+1}}\bigg). 
 \end{eqnarray}
This result modifies when $c_{v_{k}}$ or $c_{v_{k+1}}$ are not in $\mathcal{V}$ ie, for $s_j=1,\;{\rm and/or}\; N$ for some $j$.
These can be compactly written as Eq. (\ref{eq:prodSpr}),
but if $\{s_j\}\cap \{1,N\}=1$ the corresponding $v_k,x_{k}=0$ and if $\{s_j\}\cap \{1,N\}=N$ the corresponding $v_{k+1}=N+1,x_{v_{k+1}}=0$.
All the above sign functions are obtained from \eqref{sgnrels}.

\subsubsection*{Vectors $\{c_{v_l}^\star\}$ for $M_L$'s}
Next we proceed with the vectors $\{c_{v_l}^\star\}$ for the $M_L$
functions \eqref{EMp}. Given a set of $L$ vectors $\{c_{v_l}\}$ such that $v_l<v_j$ if $l<j$, we can use the equations \eqref{sgnrels} repeatedly to obtain  $\{c_{v_l}^\star\}$. Also using \eqref{ortholeft} and \eqref{orthoQ} a set of $u_l$'s can be obtained to write the relevant $M_L$ functions.

To obtain a set of orthogonal vectors we can start by considering the
ordered set $\mathcal{V}=\{c_{v_1},c_{v_2},\dots , c_{v_L}\}$. Then using equation (\ref{ortholeft}) we write an analogous expression for the set $V^{(1)}$ in section \ref{sec:ui}, we call this new set of cardinality $L$ as $\tilde{V}^{(v_1)}$ whose $p$-th element is given by $\tilde{V}^{(v_1)}_p$ as follows:
\begin{eqnarray}
	\tilde{V}^{(v_1)}_1=c_{v_1},\quad
\tilde{V}^{(v_1)}_j=	c_{v_j}-\frac{N+1-v_j}{N+1-v_{j-1}}c_{v_{j-1}}.
\end{eqnarray} 
Using \eqref{orthoQ} we can write the orthonormal basis ${V}^{(v_1)}$ as
\begin{eqnarray}\nn
	{V}^{(v_1)}_1=\sqrt{\frac{N+1}{v_1(N+1-v_1)}}\tilde{V}^{(v_1)}_1,\quad 
{V}^{(v_1)}_j=	\sqrt{\frac{N+1-v_{j-1}}{(v_j-v_{j-1})(N+1-v_{j})}}\tilde{V}^{(v_1)}_j,\quad \\ \label{setV}
\end{eqnarray}
for $j=2,\dots, L$.
To obtain the dual vectors for the elements in $S$ we need to treat $c_{v_1},c_{v_j},c_{v_L}$ for $j\notin \{1,L\}$ separately.
\begin{eqnarray}\nn
	c_{v_1}^\star &=& \frac{1}{v_2-v_1}\bigg(\frac{v_2}{v_1}c_{v_1}-c_{v_2}\bigg),\quad c_{v_L}^\star = \frac{1}{v_L-v_{L-1}}\bigg(\frac{N+1-v_{L-1}}{N+1-v_L}c_{v_L}-c_{v_{L-1}}\bigg),\\ 
	c_{v_j}^\star &=& \frac{(v_{j+1}-v_{j-1})c_{v_j}-(v_{j+1}-v_{j})c_{v_{j-1}}-(v_{j}-v_{j-1})c_{v_{j+1}}}{(v_j-v_{j-1})(v_{j+1}-v_j)}.
\end{eqnarray}
In the following subsection we illustrate the procedure described above.

\subsection{Illustration of $A_9$}\label{A9}
In this section we will illustrate a few sample terms in $\CR_\mu$ for the $A_9$ lattice
where the Dynkin diagram is denoted by the following figure:
\vspace{.5cm}\\
\begin{tikzpicture}[
    every node/.style={circle, draw, minimum size=6pt, inner sep=0pt},
    node distance=1.5cm,
    start chain=going right,
    thick
]
    \foreach \i in {1,...,9} {
        \node[on chain] (n\i) {};
    }
    \foreach \i in {1,...,8} {
    \draw (n\i) -- (n\the\numexpr\i+1\relax);
}

    \foreach \i in {1,...,9} {
        \node[below=5pt of n\i, draw=none, minimum size=0pt] {$\alpha_{\i}$};
    }
\end{tikzpicture}\\
We will not distinguish the basis of $A_9$ vectors as the simple roots $\{\alpha_j\}$ from the $\{c_j^*\}$. Essentially this will not change the $M_P$ or $E_P$'s appearing in the construction of $\CR_\mu$ so long as they are related to the simple roots by Weyl reflection along one of these simple roots. In the following diagrams we denote the black filled circles as the corresponding vectors which form $M_P$'s while the rest form the $\{c_{s_j}\}$.
For the argument of $\sgn()$ functions we take all $x_{j}\in\mathbb{Z}$. In general there may be a shift due to the Jacobi variable which is not relevant for the illustrative purposes of this section.

\vspace{0.3cm}
\noindent
{\bf Example 1}
\vspace{.5cm}\\
\begin{tikzpicture}[
	every node/.style={circle, draw, minimum size=6pt, inner sep=0pt},
	node distance=1.5cm,
	start chain=going right,
	thick
	]
	\foreach \i in {1,...,9} {
		\ifnum\i=1
		\node[on chain, fill=black] (n\i) {};
		\else\ifnum\i=3
		\node[on chain, fill=black] (n\i) {};
		\else\ifnum\i=5
		\node[on chain, fill=black] (n\i) {};
		\else\ifnum\i=7
		\node[on chain, fill=black] (n\i) {};
		\else\ifnum\i=9
		\node[on chain, fill=black] (n\i) {};    
		\else
		\node[on chain] (n\i) {};
		\fi\fi\fi\fi\fi
	}
	
	\foreach \i in {1,...,8} {
		\pgfmathtruncatemacro{\next}{\i+1}
		\draw (n\i) -- (n\next);
	}
	
	\foreach \i in {1,...,9} {
		\node[below=5pt of n\i, draw=none] {$c_{\i}$};
	}
	
	
\end{tikzpicture}

\noindent
The above example corresponds to the term with $$M_5(
\{c_1,c_3,c_5,c_7,c_9\},x;A_9)\times G_1.$$
In this case all $c_{s_j+1}\notin S$, so
using (\ref{oneskipV}) we find that the corresponding sign products are:
\begin{eqnarray}\nn
	&G_1=&\sgn(2x_2-x_1-x_3)\,\sgn(2x_4-x_3-x_1)\,\sgn(2x_6-x_5-x_7)\\
	&&\times\, \sgn(2x_8-x_7-x_9).
\end{eqnarray}
Using (\ref{sgnrels}) we can also write
\begin{eqnarray}
	c_1^\star &=& \frac{1}{2}\big(3c_3-c_1\big),  \quad c_3^\star = \frac{1}{2}\big(2c_3-c_1-c_5\big),\\ \nn
	c_5^\star &=& \frac{1}{2}\big(2c_5-c_3-c_7\big)  ,\quad c_7^\star = \frac{1}{2}\big(2c_7-c_5-c_9\big),\\ \nn
	c_9^\star &=& \frac{1}{2}(3c_9-c_7).
\end{eqnarray}
One set of orthogonal vectors can be given in the plane of $\{c_1,c_3,c_5,c_7,c_9\}$ using (\ref{ortholeft}) as follows:
\begin{eqnarray}
	\tilde{V}^{(v_1)}=\left\{	c_1, c_3-\frac{7}{9}c_1,c_5-\frac{5}{7}c_3,c_7-\frac{3}{5}c_5,c_9-\frac{1}{3}c_7 \right\}.
\end{eqnarray}
Using \eqref{setV} we can write the form of $M_5$ as before.

\vspace{0.3cm}
\noindent
{\bf Example 2}
\vspace{.5cm}\\
\begin{tikzpicture}[
    every node/.style={circle, draw, minimum size=6pt, inner sep=0pt},
    node distance=1.5cm,
    start chain=going right,
    thick
]
    \foreach \i in {1,...,9} {
        \ifnum\i=2
            \node[on chain, fill=black] (n\i) {};
        \else\ifnum\i=3
            \node[on chain, fill=black] (n\i) {};
        \else\ifnum\i=4
            \node[on chain, fill=black] (n\i) {};
        \else\ifnum\i=6
            \node[on chain, fill=black] (n\i) {};
        \else\ifnum\i=7
            \node[on chain, fill=black] (n\i) {};    
        \else
            \node[on chain] (n\i) {};
        \fi\fi\fi\fi\fi
    }
    
    \foreach \i in {1,...,8} {
        \pgfmathtruncatemacro{\next}{\i+1}
        \draw (n\i) -- (n\next);
    }

    \foreach \i in {1,...,9} {
        \node[below=5pt of n\i, draw=none] {$c_{\i}$};
    }


\end{tikzpicture}

\noindent
The above picture corresponds to the term with $$M_5(
\{c_2,c_3,c_4,c_6,c_7\},x;A_9)\times G_2.$$
Using the equations in (\ref{sgnrels}), we get the relevant product of $\sgn$ functions as
\begin{eqnarray}
G_2=\sgn(2x_1-x_2)\,\sgn(2x_5-x_4-x_6)\,\sgn(3x_8-2x_7)\,\sgn(3x_9-x_7).
\end{eqnarray}
Using \eqref{sgnrels} we can also write
\begin{eqnarray}
	c_2^\star &=& \frac{3}{2}c_2-c_3,\quad c_3^\star = 2c_3-c_2-c_4,\\ \nn
	c_4^\star &=& \frac{1}{2}\big(3c_4-2c_3-c_6\big),\quad c_6^\star = \frac{1}{2}\big(3c_6-c_4-2c_7\big),\\ \nn
	c_7^\star &=& \frac{4}{3}c_7-c_6.
\end{eqnarray}
An orthogonal set of vectors can be given in the plane of $\{c_2,c_3,c_4,c_6,c_7\}$ using \eqref{ortholeft} as follows:
\begin{eqnarray}
\tilde{V}^{(v_1)}=\left\{	c_2, c_3-\frac{7}{8}c_2, c_4-\frac{6}{7}c_3, c_6-\frac{2}{3}c_4, c_7-\frac{3}{4}c_6\right\}.
\end{eqnarray}

\vspace{0.3cm}
\noindent
{\bf Example 3}
\vspace{.5cm}\\
\begin{tikzpicture}[
    every node/.style={circle, draw, minimum size=6pt, inner sep=0pt},
    node distance=1.5cm,
    start chain=going right,
    thick
]
    \foreach \i in {1,...,9} {
        \ifnum\i=1
            \node[on chain, fill=black] (n\i) {};
        \else\ifnum\i=2
            \node[on chain, fill=black] (n\i) {};
        \else\ifnum\i=3
            \node[on chain, fill=black] (n\i) {};
        \else\ifnum\i=8
            \node[on chain, fill=black] (n\i) {};
        \else\ifnum\i=9
            \node[on chain, fill=black] (n\i) {};    
        \else
            \node[on chain] (n\i) {};
        \fi\fi\fi\fi\fi
    }
    
    \foreach \i in {1,...,8} {
        \pgfmathtruncatemacro{\next}{\i+1}
        \draw (n\i) -- (n\next);
    }

    \foreach \i in {1,...,9} {
        \node[below=5pt of n\i, draw=none] {$c_{\i}$};
    }


\end{tikzpicture}

\noindent
The above diagram corresponds to the term with $$M_5(
\{c_1,c_2,c_3,c_8,c_9\},x;A_9)\times G_3,$$
where $G_3$ is determined using the last three equations in
(\ref{sgnrels}) as
\begin{eqnarray}\nn
&G_3=&\sgn(5x_4-4x_3-x_8)\,\sgn(5x_5-3x_3-2x_8)\,\sgn(5x_6-2x_3-3x_8)\\ 
&&\times\, \sgn(5x_7-x_4-4x_8).
\end{eqnarray}
Similarly we can write
\begin{eqnarray}
	c_1^\star &=& 2c_1-c_2 ,\quad c_2^\star =2c_2-c_1-c_3,\\ \nn
	c_3^\star &=& \frac{1}{5}(6c_3-5c_2-c_8),\quad c_8^\star= \frac{1}{5}(6c_8-5c_9-c_3),\\ \nn
	c_9^\star &=& 2c_9-c_8.
\end{eqnarray}
Here an orthogonal set of vectors can be given in the plane of $\{c_1,c_2,c_3,c_8,c_9\}$ using \eqref{ortholeft} as follows
\begin{eqnarray}
	\tilde{V}^{(v_1)}=\left\{c_1,c_2-\frac{8}{9}c_1, c_3-\frac{7}{8}c_2, c_8-\frac{2}{7}c_3,c_9-\frac{1}{2}c_8 \right\}.
\end{eqnarray}

\vspace{0.3cm}
\noindent
{\bf Example 4}
\vspace{.5cm}\\
\begin{tikzpicture}[
    every node/.style={circle, draw, minimum size=6pt, inner sep=0pt},
    node distance=1.5cm,
    start chain=going right,
    thick
]
    \foreach \i in {1,...,9} {
        \ifnum\i=1
            \node[on chain, fill=black] (n\i) {};
        \else\ifnum\i=8
            \node[on chain, fill=black] (n\i) {};
        \else\ifnum\i=3
            \node[on chain, fill=black] (n\i) {};
        \else\ifnum\i=4
            \node[on chain, fill=black] (n\i) {};
        \else\ifnum\i=9
            \node[on chain, fill=black] (n\i) {};    
        \else
            \node[on chain] (n\i) {};
        \fi\fi\fi\fi\fi
    }
    
    \foreach \i in {1,...,8} {
        \pgfmathtruncatemacro{\next}{\i+1}
        \draw (n\i) -- (n\next);
    }

    \foreach \i in {1,...,9} {
        \node[below=5pt of n\i, draw=none] {$c_{\i}$};
    }

\end{tikzpicture}

\noindent
This figure corresponds to the term with $$M_5(\{
c_1,c_3,c_4,c_8,c_9\},x;A_9)
\times G_4.$$
Using \eqref{orthomain1}-\eqref{orthomain2} we get the relevant product of $\sgn$ functions as
\begin{eqnarray}\nn
G_4&=&\sgn(2x_2-x_1-x_3)\,\sgn(4x_5-3x_4-x_8)\,\sgn(4x_6-2x_4-2x_8)\\ 
&&\times \sgn(4x_7-x_4-3x_8).
\end{eqnarray}
We can also compute the relevant $M_L$ in terms of $c_{v_j}^\star,\tilde V^{(v_1)}$
\begin{eqnarray}
	c_1^\star &=& \frac{1}{2}\big(3c_1-c_3\big),\quad c_3^\star = \frac{1}{2}\big(3c_3-c_1-2c_4\big),\\ \nn
	c_4^\star &=& \frac{1}{4}\big(5c_4-4c_3-c_8\big),\quad c_8^\star = \frac{1}{4}\big(5c_8-4c_9-c_4\big),\\ \nn
	c_9^\star &=& 2c_9-c_8.
\end{eqnarray}
In this case an orthogonal set of vectors can be given in the plane of $\{c_1,c_3,c_4,c_8,c_9\}$ using \eqref{ortholeft} as follows
\begin{eqnarray}
	\tilde V^{(v_1)}=\left\{c_1,c_3-\frac{7}{9}c_1,c_4-\frac{3}{4}c_3,c_8-\frac{1}{3}c_4,c_9-\frac{1}{2}c_8\right\}.
\end{eqnarray}

\vspace{0.3cm}
\noindent
{\bf Example 5}
\vspace{.5cm}\\
\begin{tikzpicture}[
    every node/.style={circle, draw, minimum size=6pt, inner sep=0pt},
    node distance=1.5cm,
    start chain=going right,
    thick
]
    \foreach \i in {1,...,9} {
        \ifnum\i=1
            \node[on chain, fill=black] (n\i) {};
        \else\ifnum\i=2
            \node[on chain, fill=black] (n\i) {};
        \else\ifnum\i=3
            \node[on chain, fill=black] (n\i) {};
        \else\ifnum\i=4
            \node[on chain, fill=black] (n\i) {};
        \else\ifnum\i=5
            \node[on chain, fill=black] (n\i) {};    
        \else
            \node[on chain] (n\i) {};
        \fi\fi\fi\fi\fi
    }
    
    \foreach \i in {1,...,8} {
        \pgfmathtruncatemacro{\next}{\i+1}
        \draw (n\i) -- (n\next);
    }

    \foreach \i in {1,...,9} {
        \node[below=5pt of n\i, draw=none] {$c_{\i}$};
    }

\end{tikzpicture}

\noindent
This diagram corresponds to the term with $$M_5(\{
c_1,c_2,c_3,c_4,c_5\},x;A_9)\times G_5.$$
Using the second equation in (\ref{sgnrels}) we get the relevant product of $\sgn$ functions as
\begin{eqnarray}
G_5=\sgn(5x_9-x_5)\,\sgn(5x_8-2x_5)\,\sgn(5x_7-3x_5)\sgn(5x_6-4x_5).
\end{eqnarray}
We also compute the relevant $M_L$ in terms of $c_{v_j}^\star,V^{(v_1)}$. These are given by
\begin{eqnarray}
	c_1^\star &=& 2c_1-c_2, \quad c_2^\star =2c_2-c_1-c_3,\\ \nn
	c_3^\star&=& 2c_3-c_2-c_4,\quad c_4^\star =2c_4-c_3-c_5,\\ \nn
	c_5^\star &=& \frac{6}{5}c_5-c_4.
\end{eqnarray}
We determine the orthogonal set of vectors in the plane of $\{c_1,c_2,c_3,c_4,c_5\}$ using \eqref{ortholeft}
\begin{eqnarray}
	\tilde V^{(v_1)}=\left\{c_1,c_2-\frac{8}{9}c_1,c_3-\frac{7}{8}c_2,c_4-\frac{6}{7}c_3,c_5-\frac{5}{6}c_4\right\}.
\end{eqnarray}

\appendix 
\section{Review of Vign\'eras' theorem}\label{reviewVig}
Consider an $n$-dimensional integral lattice $\Gamma$ with signature
$(n_+,n_-)$. For the purposes of this paper, we can assume that
$\Gamma$ is even, such that $Q(k)\in 2\mathbb{Z}$ for all $k\in\Gamma$.\footnote{The complete statement of Vign\'eras' theorem can be extended to lattices with a characteristic vector $p$ such that for any $k\in\Gamma$, $Q(k)+B(k,p)\in 2\mathbb{Z}$. In the present case where $\Gamma$ is even,  a zero vector is a characteristic vector.  The construction of an indefinite theta series doesn't depend on the choice of the characteristic vector in any case.}
The bilinear form is given by $B(x,x')$ where $x,x'\in\Gamma\otimes \mathbb{R}$ and $Q(x)=B(x,x)$. We have the conjugacy classes $\mu\in \Gamma^*/\Gamma$. We may construct an indefinite theta series $\vartheta_{\mu}(\tau,z)$ where $\tau\in\mathbb{H},z\in \Gamma\otimes\mathbb{C}$
where each $\vartheta_\mu$ can be written in terms of a kernel $\Phi$ as
\begin{eqnarray}
	\vartheta_\mu(\tau,z) &=& \sum_{k\in {\Gamma}+\mu} \Phi(\sqrt{2\tau_2}(k+a))\,q^{\frac{Q(k)}{2}}e^{2\pi i B(k,z)},
\end{eqnarray}
where $a={\rm Im}(z)/{\rm Im(\tau)}=\frac{z_2}{\tau_2}$ if $\tau=\tau_1+i\tau_2, z=z_1+iz_2$ and $z_1,z_2\in \Gamma \otimes \mathbb{R}$.

The kernel $\Phi(x)$ has the following two properties: 
\begin{enumerate}
	\item Take any polynomial $R(x)$ and any differential operator $D(x)$ both of degree $\le 2$. Define  $f(x)=\Phi(x)e^{\pi Q(x)/2}$. Then $\int_{-\infty}^{\infty}|f(x)|dx$ is finite or, $f(x):=L_1(\Gamma\otimes \mathbb{R})$.
	Similarly,
	$f(x),D(x)f(x), R(x)f(x)\in L_1(\Gamma\otimes \mathbb{R})\cap L_2(\Gamma\otimes \mathbb{R})$.
	\item $\Phi(x)$ also satisfies\footnote{For general kernels it is possible to have an extra term proportional to $\Phi(x)$ in \eqref{kernelprop2} which is zero in the case of $E_P,M_P$ and hence ignored for the purpose of this paper.}
	\begin{eqnarray}\label{kernelprop2}
		B^{-1}(\partial_x,\partial_x)\Phi(x)=2\pi x\partial_x\Phi(x),
	\end{eqnarray}
	where $B^{-1}$ is the bilinear form in $\Gamma^*$ and $x\partial_x$ is the Euler operator.
\end{enumerate}

 With the above kernel the indefinite theta function $\vartheta_{\mu}$ behaves like a modular form, it has the following modular transformation properties:
\begin{eqnarray}
	\vartheta_{\mu}(-\frac{1}{\tau},\frac{z}{\tau}) &=&
                                                            (-i)^{n/2}(i^{n_+})\frac{\tau^{n/2}}{\sqrt{\Gamma^*/\Gamma}}
                                                            e^{\pi i
                                                            Q(z)/\tau}\sum_{\nu
                                                            \in \Gamma^*/\Gamma} e^{-2\pi i B(\mu,\nu)}\vartheta_{\nu}(\tau,z),\\ \nn
	\vartheta_{\mu}(\tau+1,z) &=& e^{\pi i Q(\mu)}\vartheta_{\mu}(\tau,z).
\end{eqnarray}

\section{Bases for $A_N$ and derivation of $M_N$}
\label{app:ANlattice}
This section provides a detailed derivation of the error function
$M_N$ for the $A_N$ lattice in Section \ref{ANlattice}.  
We consider the set $\{c_l\}$ of vectors dual to simple roots of $A_N$
and permutations of this set. For a fixed $j$
the following $\{\tilde c_l\}$ renders an orthogonal basis, 
\begin{equation}
 \begin{split}
   &\tilde c_j=c_j, \\ 
   &\tilde c_{l}=c_{l}-\frac{l}{l+1}c_{l+1} ,\; \quad \quad \hspace{3.2cm}1\leq l<j,\\
&\tilde
c_{j+k}=c_{j+k}-\frac{N+1-(j+k)}{N+2-(j+k)}c_{j+k-1},\qquad 1\leq
k\leq N-j.  \label{tildej} 
\end{split}
\end{equation}
The quadratic forms of $\tilde c_l$ are given as
\begin{eqnarray}
Q(\tilde c_j)=\frac{j(N+1-j)}{N+1}, \quad Q(\tilde c_l)=\frac{l}{l+1}, \quad Q(\tilde c_{j+k})=\frac{N+1-j-k}{N+2-j-k}.
\end{eqnarray}
From these we can construct an orthonormal basis $\{V^{(j)}\}$ as follows:
\begin{eqnarray}
\label{eq:Vell(j)def}  
V_1^{(j)} &=& \sqrt{\frac{N+1}{(N+1-j)j}}c_j,\\ \nn
V_{\ell}^{(j)} &=& \sqrt{\frac{j-\ell+2}{j-\ell+1}}c_{j-\ell+1}-\sqrt{\frac{j-\ell+1}{j-\ell+2}}c_{j-\ell+2},\quad 1<\ell\le j,\\ \nn
V_{j+\ell}^{(j)} &=&
                     \sqrt{\frac{N+2-j-\ell}{N+1-j-\ell}}c_{j+\ell}-\sqrt{\frac{N+1-j-\ell}{N+2-j-\ell}}c_{j+\ell-1},\quad
                     1\leq \ell\le N-j.
\end{eqnarray}  

Now using $c_k^*=2c_k-(c_{k+1}+c_{k-1})$
and the above basis (\ref{tildej}), we can write the coordinates
of any vector $y\in \Lambda\otimes \mathbb{R}$ as $y=\sum_{i=1}^N y_i^{(j)}V_i^{(j)}$. For $1<j<N$, this gives $B(c_l^*,y)${\footnote{Note that $B(c_l^*,y)$ may be written in any orthogonal basis of $V^{(j)}$ hence there is dependence on $j$ at the right hand side of the equation alone.}} as follows:
\begin{eqnarray} \label{basejstar}
B(c_1^{*},y) &=& {\sqrt 2}{y^{(j)}_{j}},\\ \nn
B(c_{j-l+1}^{*},y) &=& {y^{(j)}_{l}}\sqrt{\frac{j-l+2}{j-l+1}}-{y^{(j)}_{l+1}}\sqrt{\frac{j-l}{j-l+1}}, \; {\rm for}\; l=2,\dots, j,\\ \nn
B(c_j^{*},y) &=&  \frac{y^{(j)}_1 \sqrt{N+1}}{\sqrt{j(N+1-j)}}-y^{(j)}_{j+1}\sqrt{\frac{N-j}{N-j+1}}-y^{(j)}_{2}\sqrt{\frac{j-1}{j}},\\ \nn
B(c_{j+l}^{*},y) &=& {y^{(j)}_{j+l}}\sqrt{\frac{N+2-j-l}{N+1-j-l}}-{y^{(j)}_{j+l+1}}\sqrt{\frac{N-j-l}{N+1-j-l}},\\ \nn
&& {\rm for}\;\; l=1,\dots, N-1-j,\\ \nn
B(c_N^{*},y) &=& {\sqrt 2}{y^{(j)}_N}.
\end{eqnarray}
For $j=1$, only the last three equations apply, and similarly for
$j=N$ only the first three equations apply.

Next we consider the inner products $B(c_l,y)$. Since the
$\{V^{(j)}\}$ \eqref{eq:Vell(j)def} form an orthonormal basis constructed from the
$c_j$ in a specific order, $B(c_j, V_l^{(j)})$ vanishes for $l>1$, and
similarly for other $c_l$. As a result, using
\eqref{eq:Bcicl} one finds $B(c_l,y)$ in terms of the components of
$y^{(j)}$, $j=1,\dots, N$, $k>0$, as 
\begin{align}\label{baseje}
 &B(c_j,y) = y^{(j)}_1 \sqrt{\frac{j(N+1-j)}{N+1}}, \\ \nn
 &B(c_{j-k+1},y) = y^{(j)}_1 (j-k+1) \sqrt{\frac{N+1-j}{j(N+1)}}\\ \nn
&+ \sum_{l=2}^{k}  y^{(j)}_{l}\frac{(j-k+1)}{N+1} \left((N-j+l)\sqrt{\frac{j-l+2}{j-l+1}} \right.\\ \nn
&\left. -(N-j+l-1)\sqrt{\frac{j-l+1}{j-l+2}} \right)\\ \nn
&= (j-k+1) \left(\sqrt{\frac{N+1-j}{j(N+1)}}y_1^{(j)}+\sum_{l=2}^k \frac{1}{\sqrt{(j-l+2)(j-l+1)}}y_l^{(j)}\right),\\ \nn
&B(c_{j+k},y) =  (N+1-j-k) \left( y^{(j)}_1\sqrt{\frac{j}{(N+1-j)(N+1)}}\right.\\ \nn
&+\left.\sum_{l=1}^{k}\frac{y^{(j)}_{j+l}}{\sqrt{(N+2-l-j)(N+1-l-j)}}  \right). \nn
\end{align}.

For the case $j=1$, we can solve $y_l^{(1)}$ in terms of
$B(c_k^*,y)$ from \eqref{basejstar}. The result is given by the following:
\begin{eqnarray}\label{ywithstar2}
y_N^{(1)} &=& 
              \frac{1}{\sqrt{2}}B(c_N^{*},y),  \\
\vdots \nn \\ \nn
y_{N-k+1}^{(1)} &=& \sqrt{\frac{1}{k(k+1)}}\left(k B(c_{N-k+1}^{*},y)+(k-1)B(c_{N-k+2}^{*},y)+\dots +B(c_N^{*},y)\right)\\ 
\vdots \nn \\ \nn
y_1^{(1)} &=& \sqrt{\frac{1}{N(N+1)}}\left(N B(c_1^{*},y)+(N-1)B(c_2^{*},y)+\dots +B(c_N^{*},y)\right).
\end{eqnarray}

These results were used in determining the structure for $M_N$ as a
complex integral \eqref{MnAn2}, which can also be extended to the integral
$E_N$ using the Eq. (\ref{baseje}). 
One last ingredient we need for the construction of the explicit form of $M_N$ in
Subsection \ref{viconst} is the following partial fraction. Using equation
(\ref{ywithstar2}) for the basis $(j)=(1)$, we can write,
\begin{equation}
 \begin{split}
& \qquad \sum_{j=1}^N \frac{u_j^{(1)}}{\sqrt{(N+1-j)(N+2-j)}}
\sum_{\ell=1}^{N+1-j} \frac{(N+2-j-\ell)} {\prod_{l\ne j+\ell-1}
  B(c_l^{*},y)}. \label{partialrelAn}
\end{split} 
\end{equation} 

This function $M_N$ has an interesting iterative property, which is
utilized to extract the structure of $M_N$ as an iterated period
integral \eqref{MnAniter}. Analogous to Section \ref{sec:Mn}, we can write an iterated integral structure of $\tilde M_N$, 
\begin{eqnarray}\nn
\tilde M_N(\{u_j^{(1)}\},\{c_j^*\},A_N) &=& \int \frac{\sum_j  u_j^{(1)}y^{(1)}_j}{\prod_l
                     B(c_l^{*},y^{(1)})}e^{ \sum_{j=1}^N -\pi i
                     (y_j^{(1)\,2}+2 u_j^{(1)}y_j^{(1)}) }
                     \prod_{j=1}^N dy_j^{(1)}.\\ 
\end{eqnarray}
We use (\ref{partialrelAn}) to rewrite $\tilde M_N$ as:
\begin{equation}
  \begin{split}
  \label{Mnrel0An}
& \tilde M_N(\{u_j^{(1)}\},\{c_j^*\},A_N)=\int dy_{1}^{(j+\ell-1)} e^{-\pi i y_{1}^{(j+\ell-1)\,2}-2\pi i u_{1}^{(j+\ell-1)}y_{1}^{(j+\ell-1)} }\\ 
&\qquad  \times \int \prod_{p>1} dy_p^{(j+\ell-1)}\sum_{j=1}^N
\frac{u_j^{(1)}}{\sqrt{(N+1-j)(N+2-j)}} \\ 
& \qquad \times \sum_{\ell=1}^{N+1-j}\frac{(N+2-j-\ell)} {\prod_{l\ne j+\ell-1} B(c_l^{*},y^{(j+\ell-1)})}\,e^{-\sum_{p>1} (\pi y_p^{(j+\ell-1)\,2}+2\pi i  u_p^{(j+\ell-1)} y_p^{(j+\ell-1)})} ,
\end{split}
\end{equation} 
where in each summand, we have made the change in integration
variables from the basis $(1)$ to the basis $(j+\ell-1)$. 

In the notation of Section \ref{lastpaperrev}, we note that the Schur
complement of the matrix ${\mathbf{D}}$ corresponding to the (Weyl
reflected) $A_N$ quadratic form after the projection orthogonal to the $c_j$ direction corresponds
to the quadratic form of $A_{j-1}\oplus A_{N-j}$
lattice. This can be formally checked by determining the components
of $c_{j-l}$ and $c_{j+l}$ as follows:
\begin{equation}
  \label{eq:cj-lperpjrels}
  \begin{split}
&B(c_{(j-k)\perp j},c_{(j+l) \perp j})= 0,\\ 
&B(c_{(j-k)\perp j},c_{(j-l)\perp j}) = {\rm min}(j-k,j-l)-\frac{(j-k)(j-l)}{j},\\ 
&B(c_{(j+k)\perp j},c_{(j+l)\perp j}) = {\rm
  min}(j+k,j+l)-\frac{(j+k)(j+l)}{N+j-1},
\end{split}
\end{equation}
where  
\begin{eqnarray}
\label{eq:cjpml}
c_{(j-l) \perp j}&=&c_{j-l}-\frac{j-l}{j}c_j,\\ \nn
c_{(j+k) \perp j}&=&c_{j+k}-\frac{N+1-j-k}{N+1-j}c_j.	
\end{eqnarray}
The above ensures that
\begin{enumerate}
	\item The lattice splitting occurs as described above
          \eqref{eq:cj-lperpjrels}, ie the components of $c_{j-\ell}$
          orthogonal to $c_j$ generate the lattice dual to the $A_{j}$
          root lattice.
	\item In replacing the integrand of $\tilde M_N$ with  $M_{L}$ factors we obtain the following factor $\frac{1}{\sqrt{L+1}}$ from each smaller lattice.

\end{enumerate}

This allows us to write $\tilde M_N$, using Eq. (\ref{tilMn})

\begin{align}\label{MnrelAn}
&\tilde M_N(\{u_j^{(1)}\},\{c_j^*\},A_N)\\ \nn
&\quad =\int \sum_j \frac{u_j^{(1)}y^{(1)}_j}{\prod_l B(c_l^{*},y^{(1)})}e^{ \sum_{p=1}^N -\pi i y_p^{(1)\,2}-2\pi i u_p^{(1)}y_p^{(1)} } \prod_{p=1}^N dy_p^{(1)}\\ \nn
&\quad = e^{-\pi u_{1}^{(j+\ell-1)\, 2}}   \sum_{j=1}^N \frac{u_j^{(1)}}{\sqrt{(N+1-j)(N+2-j)}} \sum_{\ell=1}^{N+1-j}\sqrt{\frac{N+2-j-\ell}{j+\ell-1}}\\ \nn
&  \qquad   \times \left(\frac{i}{\pi} \right)^{1-N} M_{j+\ell-2}(u_2^{(j+\ell-1)},\dots, u_{j+\ell-1}^{(j+\ell-1)},\{c_1^*,\dots, c_{j+\ell-2}^*\};A_{N})\\ \nn
&  \qquad   \times M_{N+1-(j+\ell)}(u_{j+\ell}^{(j+\ell-1)},\dots, u_{N}^{(j+\ell-1)},\{c_{j+\ell}^*,\dots, c_{N}^* \};A_{N}). \nn
\end{align}

To reach the final equality we observe that:
\begin{enumerate}
	\item Note that $c_j^*=d_j$ the simple roots of $A_N$ lattice. Hence, $c_j^*$ is orthogonal to all $c_{i\ne j}^* $ except $c_{j\pm 1}^*$ for any $j$. 
	\item In the context of $A_N$ lattice we have for any $\ell$ within the range $1< \ell\le j$, $V^{(j)}_{\ell}$ is orthogonal to $c_{j+p}$ where $1\le p\le N-j$. 
	\item Similarly for any $\ell$ in the range $1<\ell\le j$, $c_{\ell}$ is orthogonal to $V^{(j)}_{j+p}$ where $1\le p\le N-j$.
	\item The determinant of the matrices formed by elements of $V^{(j)}${\footnote{write $c_j$'s as column vector with entries 1 in the $j$-th position.}} for all $j$ is given by $\sqrt{N+1}$ and hence the Jacobian is $+1$ as expected from the discussion in Section \ref{sec:Mn}.
\end{enumerate}

The above result may be simplified by doing the sum on $j,\ell$. It is simpler to do the sum by fixing $j+\ell$.
Given any fixed $p=j+\ell-1$ in Eq. \eqref{MnrelAn} the contribution is given as follows:
\begin{eqnarray}\nn
&&\left(\frac{i}{\pi}\right)^{1-N}\frac{1}{\sqrt{(N+1-p)p}}\,B(c_{p},x)\,M_{p-1}(u_2^{(p)},\dots, u_{p}^{(p)},\{c_1^*,\dots, c_{p-1}^*\},A_N)\\ \nn
&& \qquad \times M_{N-p}(u_{p+1}^{(p)},\dots, u_{N}^{(p)},\{c_{p+1}^*,\dots, c_{N}^*\};A_N)\\ 
&&\quad = \left(\frac{i}{\pi}\right)^{1-N}\frac{u_1^{(p)}}{\sqrt{N+1}}M_{p-1}(u_2^{(p)},\dots, u_{p}^{(p)},\{c_1^*,\dots, c_{p-1}^*\},A_N)\\ \nn
&& \qquad \times M_{N-p}(u_{p+1}^{(p)},\dots, u_{N}^{(p)},\{c_{p+1}^*,\dots, c_{N}^*\};A_N). \label{u1punp}
\end{eqnarray}

We arrive at
\begin{align}
& M_3(\{u_{l}^{(p)}\}, \{d_l\};A_3) = -i\,q^{k_4^2+k_5^2+k_6^2-k_4k_5-k_5k_6}\int_{-\bar\tau}^{i\infty}dw_1 \int_{w_1}^{i\infty}dw_2\\ \nn &\int_{w_2}^{i\infty}\frac{dw_3}{\sqrt{i(w_1+\tau)(w_2+\tau)(w_3+\tau)}} \\ \nn
&  \bigg( \frac{k_4(2k_6-k_5)(3k_5-2k_4)}{3}   e^{\pi i \left(w_1\frac{4k_4^2}{3}+w_2\left(\frac{3k_5^2}{2}+\frac{2k_4^2}{3}-2k_4k_5\right)+w_3(2k_6^2-2k_5k_6+k_6^2/2)\right)} \\ \nn
&+  k_4(2k_5-k_4-k_6) \big(k_6-\frac{k_4}{3}\big)  e^{\pi i \left(w_1\frac{4k_4^2}{3}+w_2\left(\frac{3}{2}(k_4-k_6/3)^2 \right)+2w_3(k_5-k_4/2-k_6/2)^2\right)}  \\ \nn
&+ \frac{k_5}{2} (2k_4-k_5)(2k_6-k_5)  e^{\pi i\left(w_1 k_5^2+2w_2(k_4-k_5/2)^2+2w_3(k_6-k_5/2)^2\right)} \\ \nn
&+\{k_4\leftrightarrow k_6\}\bigg).
\end{align}

\providecommand{\href}[2]{#2}\begingroup\raggedright\endgroup


\end{document}